\def\R{\mathbb R}
\def\N{\mathbb N}
\def\E{\mathbb E}
\def\shb{{\cal B}}
\def\shc{{\cal C}}
\def\shd{{\cal D}}
\def\shf{{\cal F}}
\def\shm{{\cal M}}
\def\shp{{\cal P}}
\def\shs{{\cal S}}
\def\shy{{\cal Y}}
\newtheorem{theo}{Theorem}[section]
\newtheorem{lemma}[theo]{Lemma}
\newtheorem{prop}[theo]{Proposition}
\newtheorem{rem}[theo]{Remark}
\newtheorem{cor}[theo]{Corollary}
\newtheorem{defi}[theo]{Definition}
\newtheorem{property}[theo]{Property}
\newtheorem{Assumption}[theo]{Assumption}
\newcommand{\beqnar}{\begin{eqnarray*}}
\newcommand{\eeqnar}{\end{eqnarray*}}
\newcommand{\ba}{\begin{array}}
\newcommand{\ea}{\end{array}}
\newcommand{\halb}{\frac{1}{2}}
\newenvironment{proof}[1]{\begin{trivlist}\item {\it
\bf Proof.}\quad} {\qed\end{trivlist}}
\newenvironment{prooff}[1]{\begin{trivlist}\item {\it
\bf Proof}\quad} {\qed\end{trivlist}}
\newcommand{\qed}{\nopagebreak\hspace*{\fill}
{\vrule width6pt height6ptdepth0pt}\par}
\begin{document}

\title{Probabilistic representation for solutions
of an irregular porous media type  equation.}

\author{ Philippe Blanchard (1), Michael R\"ockner (2)
and Francesco Russo (3) } 

\date{}
\maketitle

\thispagestyle{myheadings}
\markright{Irregular porous media type equation}

{\bf Summary:} We consider a
porous media type equation over all of $\R^d$ with $d = 1$,
 with monotone  discontinuous coefficient with linear growth and prove a
probabilistic representation of its solution in terms of an associated 
microscopic diffusion. 
The interest in such a singular porous media equations is due to the
fact that they can model systems exhibiting the phenomenon
of self-organized criticality.
One of the main analytic ingredients of the proof is a new result
on uniqueness of distributional solutions of a linear PDE on $\R^1$
 with non-continuous coefficients.

{\bf Key words}: singular porous media type equation,
probabilistic representation, self-organized criticality (SOC).

{\bf2000  AMS-classification}: 60H30, 60H10, 60G46,
 35C99, 58J65


{\bf Actual version:} October 10th 2009

\bigskip

\begin{itemize}
\item[(1)] Philippe Blanchard,
Fakult\"at f\"ur Physik, 
Universit\"at   Bielefeld,  D--33615 Bielefeld,
Germany.
\item[(2)] Michael R\"ockner,
Fakult\"at f\"ur Mathematik, 
Universit\"at   Bielefeld, 
\\ D--33615 Bielefeld, Germany  and \\
Department of Mathematics and Statistics, 
Purdue University, \\
W. Lafayette, IN 47907, USA.
\item[(3)] Francesco Russo,
INRIA Rocquencourt, Equipe MathFi and Cermics
Ecole des Ponts,
Domaine de Voluceau,
Rocquencourt - B.P. 105,
F-78153 Le Chesnay Cedex, France\\ and
Universit\'{e} Paris 13, Institut Galil\'{e}e, Math\'ematiques,
\end{itemize}

\vfill \eject

\section{Introduction}

We are interested in the probabilistic representation of the solution to
 a porous media type equation  given by
\begin{equation}
\label{E1.1}
\left \{
\begin{array}{ccc}
\partial_t u&=& \frac{1}{2} \partial_{xx}^2(\beta(u)),  \ t \in [0, \infty[
\\
u(0,x)& = & u_0(x), \ x \in \R,
\end{array}
\right.
\end{equation}
in the sense of distributions, where $u_0$ is an initial 
probability density.
We look for a solution of (\ref{E1.1}) with time evolution in $L^1(\R)$.

We always make  the general following assumption on $\beta$.
\begin{Assumption}\label{E1.0}
\begin{itemize}
\item  $\beta: \R \rightarrow \R$ is
 monotone increasing. 
\item 
$\vert \beta (u) \vert \le  {\rm const} \vert u \vert, \ u \in \R.  $ 
\item  There is $\lambda > 0$ such that
$(\beta \mp  \lambda id)(x) \rightarrow \mp \infty$ 
when $x \rightarrow \mp \infty$
where $id(x) \equiv x$.
\item $u_0 \in (L^1 \bigcap L^\infty)(\R).$
\end{itemize}
\end{Assumption}

\begin{rem} \label{RE1,0bis}
\begin{enumerate}
\item  Since $\beta$ is monotone, (\ref{E1.0}) implies that
   $ \beta (u) = \Phi^2(u) u$, $\Phi$ being a
non-negative bounded Borel function. 
\item $\beta(0) = 0$ and $\beta$ is continuous at zero.
\end{enumerate}
\end{rem}
We recall that when $\beta (u) = \vert u \vert
 u^{m-1}$, $m  >   1$, (\ref{E1.1}) is  nothing else but the
classical {\it porous media equation}.

One of our final targets is to consider $\Phi$
 as continuous except for a possible jump at one positive point,
say $e_c  > 0  $. A  typical example is 
\begin{equation} \label{E1.1a} 
\Phi (u) = H(u-e_c), 
\end{equation}
$H$ being the Heaviside
function. 

The analysis  of (\ref{E1.1}) and its probabilistic representation
 can be done in the framework of monotone partial differential
equations (PDE) allowing multi-valued functions and will be discussed
in detail in the main body of the paper. 
This extension is necessary, among other things, to allow the graph associated
with $\beta$ to be a maximal monotone graph.
We refer to  Assumption
\ref{H3.0} below.
In this introduction, for simplicity, 
we restrict our presentation to the single-valued case.

\begin{defi} \label{DNond} 
We will say that equation (\ref{E1.1}) or $ \beta$ is
{\bf non-degenerate} if there is a constant $c_0 > 0$ such that
$ \Phi  \ge c_0  $. 
\end{defi}
Of course, if $\Phi$ is as in
  (\ref{E1.1a}), then   $\beta$ in is not non-degenerate.
 In order to have $\beta$ to be non-degenerate, one needs to 
add a positive constant to it.

Several contributions were made in this framework starting from
\cite{BeBrC75} for existence,   \cite{BrC79} for  uniqueness in the
case of bounded solutions 
 and \cite{BeC81} 
for continuous dependence on the coefficients.
The authors consider  the case where $\beta$ is  continuous, even
if their arguments allow some extensions for the discontinuous case.

As mentioned in the abstract,  the first motivation 
of this paper was to discuss a continuous time model 
 of self-organized criticality (SOC), which are described by equations
 of type (\ref{E1.1a}).

SOC is a property of  dynamical systems which
 have a critical point as an attractor, see \cite{bak86} for a 
significant monograph on the subject.
SOC is typically observed in slowly-driven out-of-equilibrium 
systems with threshold dynamics relaxing through a 
hierarchy of avalanches of all sizes. We, in particular, refer to
the  interesting physical paper
 \cite{BanJa}.
 The latter  makes reference to a
 system whose evolution is similar to the
 evolution of  a ``snow layer'' under the influence of 
an ``avalanche effect''
which starts when the top of the layer attains a critical value
$e_c$. Adding  a stochastic noise should describe other
contingent effects. For instance, an additive perturbation by noise
could describe the regular effect of ``snow falling''.
In Bantay et al. (\cite{BanJa}) it was proposed to describe this phenomenon 
by a singular diffusion involving precisely a coefficient of the type
\eqref{E1.1a}.

In the absence of noise the density $u(t, x), t > 0, x \in \R$
of this diffusion is formally described by  \eqref{E1.0}
and $\beta (u) = \Phi(u)^2 u$ where $\Phi$ is given by \eqref{E1.1a}.


Such a discontinuous monotone $\beta$ has to be considered as a multivalued map
in order to apply monotonicity methods. 

The singular non-linear diffusion equation  (\ref{E1.1}) models  the {\it macroscopic} phenomenon for
which we try to give a {\it microscopic}  probabilistic
representation, via a non-linear stochastic differential equation
(NLSDE)  modeling the evolution of 
a single point on the layer.

Even if the irregular diffusion equation  (\ref{E1.1}) can be shown to
be well-posed, up to now we can only prove existence (but not yet
uniqueness) of solutions to the
corresponding NLSDE. On the other hand if $\Phi \ge c_0 >  0$, 
then uniqueness can be proved. 
For our applications,  this  will solve the case
$\Phi(u) = H(x - e_c) + \varepsilon $
for some positive $\varepsilon $.
The main novelty with respect to the literature is the fact
that $\Phi$ can be irregular with jumps.


To the best of our knowledge the first author who considered a
probabilistic representation (of the type studied in this paper) for the
solutions of a non-linear deterministic PDE was McKean
\cite{mckean}, particularly in relation with the so called propagation of
chaos. In his case, however, the coefficients were smooth. From then on
the literature steadily grew and nowadays there is a vast amount of
contributions to the subject, especially when the non-linearity is in the
first order part, as e.g. in Burgers equation. We refer the reader to the
excellent survey papers \cite{sznit} and \cite{graham}.

A probabilistic interpretation of (\ref{E1.1}) when 
$\beta(u) = \vert u \vert  u^{m-1}, m   > 1,  $ was provided for
instance in \cite{BCRV}. For the same $\beta$, though  the method
could  be adapted  to the case where $\beta$ is Lipschitz, in
\cite{J00} the author  has studied the evolution equation  (\ref{E1.1})
when the initial condition and the evolution takes values
in the class of  probability distribution functions on $\R$. 
Therefore, instead of an evolution  equation in $L^1(\R)$,  
he considers a state space of
functions vanishing at $- \infty$ and with value $1$ at $+ \infty$.
He  studies both the probabilistic representation
and propagation of chaos.

Let us now describe the principle of the mentioned probabilistic
representation. 
The stochastic differential equation (in the weak sense),
rendering the probabilistic  representation, is given 
by the following (random) non-linear diffusion:
\begin{equation}
\label{E1.2}
\left \{
\begin{array}{ccc}
Y_t &=& Y_0 + \int_0^t \Phi(u(s,Y_s)) dW_s  \\
{\rm Law \quad density } (Y_t) &=& u(t,\cdot), \\
\end{array}
\right.
\end{equation}
where $W$ is a classical Brownian motion.
The solution of that equation may be visualised as a continuous process $Y$
on some filtered probability 
space $(\Omega, \shf, (\shf_t)_{ t \ge 0}, P)$
equipped with a Brownian motion $W$.
  By looking  at a properly chosen version, we can
and shall assume that $Y:[0,T] \times \Omega \rightarrow \R_+ $ 
is $\shb([0,T]) \otimes \shf$-measurable. Of course, we can only
have (weak) uniqueness for (\ref{E1.2}) if we fix the initial
distribution,
 i.e. we have to fix the distribution (density)  $u_0$ of $Y_0$.

The connection with (\ref{E1.1}) is then given by the following
result.

\begin{theo} \label{TI.1}
\begin{description}
\item{(i)}
Let us assume the existence of a solution $Y$ for
(\ref{E1.2}).
Then $u: [0,T] \times \R \rightarrow \R_+$   provides a solution in the sense
of distributions of (\ref{E1.1}) with
$u_0 := u(0,\cdot)$.
\item{(ii)} Let $u$ be a solution of (\ref{E1.1})
in the sense of distributions and let $Y$ solve the first  
equation in (\ref{E1.2}) with law density $v(t, \cdot)$ and initial
law density $u_0 = u(0,\cdot)$. Then
\begin{equation} \label{E1.4prime}
\partial_t v =  \frac{1}{2} \partial_{xx}^2(\Phi^2(u) v),  
\end{equation}
in the sense of distributions. In particular, if $v$ is the unique
solution of  (\ref{E1.4prime}), with $v(0,\cdot) = u_0$, then $v = u$.
\end{description}
\end{theo}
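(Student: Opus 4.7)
The plan in both parts is to test $\varphi(Y_t)$ with an arbitrary $\varphi \in C_c^\infty(\R)$, apply It\^o's formula, and take expectations. By Remark \ref{RE1,0bis} the coefficient $\Phi$ is a bounded Borel function, and since $\varphi'$ has compact support, the quadratic variation
\[
\int_0^t (\varphi'(Y_s))^2 \Phi^2(u(s,Y_s))\, ds
\]
is uniformly bounded. Hence the stochastic integral that appears is a genuine $L^2$-martingale whose expectation vanishes.

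For (i), It\^o's formula applied to $\varphi(Y_t)$ with $Y$ solving (\ref{E1.2}) yields, after taking expectations,
\[
\E[\varphi(Y_t)] = \E[\varphi(Y_0)] + \frac{1}{2}\, \E \int_0^t \varphi''(Y_s)\, \Phi^2(u(s,Y_s))\, ds.
\]
Because (\ref{E1.2}) imposes that $Y_s$ has law density $u(s,\cdot)$ for each $s$, this rewrites as
\[
\int \varphi(x)\, u(t,x)\, dx = \int \varphi(x)\, u_0(x)\, dx + \frac{1}{2} \int_0^t \!\! \int \varphi''(x)\, \Phi^2(u(s,x))\, u(s,x)\, dx\, ds.
\]
Using the factorisation $\beta(u)=\Phi^2(u)\, u$ from Remark \ref{RE1,0bis}, the right-hand integrand equals $\varphi''(x)\beta(u(s,x))$, which is precisely the distributional form of (\ref{E1.1}).

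For (ii), the same It\^o computation applies, but now $u$ denotes the given distributional solution of (\ref{E1.1}), whereas the law density of $Y_s$ is $v(s,\cdot)$. Replacing the outer integration by integration against $v(s,\cdot)$ gives
\[
\int \varphi(x)\, v(t,x)\, dx = \int \varphi(x)\, u_0(x)\, dx + \frac{1}{2} \int_0^t \!\! \int \varphi''(x)\, \Phi^2(u(s,x))\, v(s,x)\, dx\, ds,
\]
which is (\ref{E1.4prime}) in the sense of distributions. The final assertion then follows by observing that $u$ itself is a solution of (\ref{E1.4prime}) with initial datum $u_0$: indeed setting $v:=u$ makes $\Phi^2(u)\,v=\Phi^2(u)\, u=\beta(u)$, so (\ref{E1.4prime}) reduces to (\ref{E1.1}), which $u$ satisfies by hypothesis. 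Hence the assumed uniqueness for (\ref{E1.4prime}) forces $v=u$.

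The It\^o computation above is essentially routine once $\Phi$ is recognised as bounded. The genuinely hard content is concentrated in the hypothesis of the last statement, namely uniqueness of distributional solutions of the linear (but discontinuous-coefficient) equation (\ref{E1.4prime}); this is the new analytic contribution advertised in the abstract, and it is what the rest of the paper must address, but it is not required to prove Theorem \ref{TI.1} itself.
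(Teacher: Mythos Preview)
Your proof is correct and follows exactly the same route as the paper: apply It\^o's formula to $\varphi(Y_t)$ for a test function $\varphi$, take expectations, and rewrite the resulting identity using the law density of $Y_t$. Your version is in fact more explicit than the paper's, since you justify why the stochastic integral is a true martingale and spell out why $u$ itself solves (\ref{E1.4prime}), points the paper leaves implicit.
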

\begin{proof} \
 Let $\varphi \in C_{0}^{\infty}(\R)$, $Y$ be a solution to the first line
  of (\ref{E1.2}) such that $v(t,\cdot)$ is the law density
  $Y_t$, for positive $t$.
 We apply It\^o's formula to $\varphi(Y)$,
to obtain
\begin{equation*}
\varphi (Y_t)= \varphi(Y_0) + \int_{0}^{t} \varphi'(Y_s)
 \Phi(u(s,Y_s)) \,dW_s +\frac{1}{2} \int_ {0}^{t}\varphi''(Y_s) 
 \Phi^2(u(s,Y_s)) \,ds
\end{equation*}
Taking  expectation we obtain 
\begin{equation*}
\int_{\R}\varphi(y)  v(t,y) dy =\int_{\R}\varphi(y) 
 u_{0}(y) dy +\frac{1}{2} \int_ {0}^{t}\,ds\int_{\R}\varphi''(y) 
 \Phi^2(u(s,y)) v(s,y)  \,dy.
\end{equation*}
Now both assertions (i) and (ii) follow.
\end{proof}
\begin{rem}\label{Pos}
\ An immediate consequence of the
probabilistic representation of a  solution of (\ref{E1.1}) is
its positivity at any time.
Also the property that  the initial condition is of mass 1  is in this case conserved.

\end{rem}
The main purpose of this paper is to show existence and uniqueness
in law of the probabilistic representation equation (\ref{E1.2}),
in the case that $\beta$ is non-degenerate 
and not necessarily continuous. 
In addition, we prove  existence for (\ref{E1.2}), in some 
degenerate   cases under certain conditions, see Subsection
\ref{sub4.2}.

Let us now briefly explain
the points that we are able to treat and the difficulties
 which naturally appear in regard to the
probabilistic representation.

For simplicity we do this for
 $\beta$  being
single-valued (and) continuous.
However, with some technical complications this generalizes to
the multi-valued case, as spelled out in the subsequent sections.

\begin{enumerate} 
\item Monotonicity methods allow us to show existence and uniqueness
of solutions to  (\ref{E1.1}) in the sense of distributions
 under the assumption  that $\beta$ is monotone, that
there exists $\lambda > 0$ with $(\beta + \lambda id) (\R) =\R$ and
that $\beta$ is  continuous at zero, 
see Proposition \ref{P3.1} below.
We emphasize that for uniqueness no surjectivity of $\beta + \lambda id$
is required, see Remark \ref{R3.2a} below.
\item Let  $a: [0,T] \times \R \rightarrow \R$
be a   strictly positive bounded Borel function.
Let $\shm(\R)$ be the set of all signed measures on $\R$ with
  finite total variation.
We prove uniqueness of solutions of  
\begin{equation}\label{E1.3}
\left \{
\begin{array}{ccc}
\partial_t v &=&\partial_{xx}^2 
(av)  \\
v(0,x)& =& u_0(x), 
\end{array}
\right.
\end{equation}
as an evolution problem in  $\shm(\R)$, at least under an additional
assumption (A), see  Theorem \ref{P3.5} below. 
\item 
If $\beta$ is {\it non-degenerate}, we can
construct a unique (weak)  solution $Y$ to the non-linear SDE corresponding
to (\ref{E1.2}), for any initial bounded probability density $u_0$ on $\R$, 
see Theorem \ref{T4.2} below.
For this construction, items 1. and 2. above are used in a crucial way.

\item 
Suppose $\beta$ possibly degenerate. 
We fix a bounded probability density $u_0$.
We set $\Phi_\varepsilon = \Phi + \varepsilon$ and 
consider the weak solution  $ Y^\varepsilon$
of 
\begin{equation} \label{E1.2b}
Y^\varepsilon_t =  \int_0^t \Phi_\varepsilon 
(u^\varepsilon(s,Y^\varepsilon_s)) 
dW_s, 
\end{equation}
where $u^\varepsilon (t,\cdot)$ is the law of $Y^\varepsilon_t, t \ge 0$,
and $Y^\varepsilon_0$ is distributed according to $u_0(x) dx$.
The sequence of laws of the  processes $(Y^\varepsilon)$ are tight.
However, the   limiting processes of convergent subsequences  may in
general not solve  the SDE 
\begin{equation} \label{E1.2c}
Y_t =  \int_0^t \Phi (u(s,Y_s)) dW_s. 
\end{equation}
However, under some additional assumptions, see Properties 
\ref{E4.6prime} and  \ref{E4.7prime}
below, it will be the case.
The analysis of the degenerate case
 in greater generality (including
case (\ref{E1.1a})) will be the subject of the forthcoming paper \cite{BRR}.

\end{enumerate}

In this paper, we proceed as follows. Section 2
is devoted to preliminaries about  elliptic PDEs satisfying  
monotonicity conditions. 

In Section 3, we first state a  general existence and uniqueness
result  (Proposition \ref{P3.1}) for equation (\ref{E1.1}) and
 provide its proof, see item 1. above.
The rest of  Section 3
is devoted to the study of the uniqueness of a deterministic,
time inhomogeneous singular linear equation 
with evolution 
 in the space of probabilities on $\R$. This will be applied 
for studying the uniqueness of equation (\ref{E1.3})  in item 2. above. 
This is only possible in the non-degenerate case,
 see Theorem \ref{P3.5}; if $\beta$ is not non-degenerate
   we give  
a counterexample in Remark \ref{RMistake1}.

Section 4 is devoted to the probabilistic representation (\ref{E1.2}).
In particular,  in the non-degenerate
  (however not smooth) case,
 Theorem \ref{T4.2}  gives
  existence and uniqueness  of the non-linear diffusion (\ref{E1.2})
which represents probabilistically (\ref{E1.1}).
In the degenerate case, Proposition \ref{PDeg} 
gives an existence result.

 Finally, we would like to mention that, in order to keep this paper
self-contained and make it accessible to a larger audience, we include 
the analytic background material and necessary (through standard) definitions.
Likewise, we tried to explain all details on the analytic delicate and 
quite technical parts of the paper which form the backbone
of the proofs for our main result.

\section{Preliminaries}

We start with some basic analytical framework.

If $f: \R \rightarrow \R$ is a bounded function we will set
$\Vert f \Vert_\infty  := \sup_{x \in \R} \vert f(x) \vert. $
  By $C_b(\R)$ we denote the space of bounded continuous real functions
and by $\shs\left(  \mathbb{R}\right)  $ the space of rapidly decreasing infinitely
differentiable functions $\varphi:\mathbb{R}\rightarrow
\mathbb{R}$, by $\shs^{\prime}\left(  \mathbb{R}\right)  $ its dual (the
space of tempered distributions).

Let $K_\varepsilon$ be the Green function of $\varepsilon - \Delta$,
that is the kernel of the operator  $(\varepsilon - \Delta)^{-1}:
L^2(\R) \rightarrow L^2(\R)$. So, for all $\varphi \in L^2(\R)$, we have
\begin{equation}
B_\varepsilon(\varphi) := 
   (\varepsilon  -\Delta)^{-1}\varphi  (x)=\int_{\mathbb{R}}%
K_\varepsilon \left(  x-y\right)  \varphi(y)dy.\label{eq kernelbis}%
\end{equation}

The next lemma provides us with an explicit expression of the
 kernel function $K_\varepsilon$. 

\begin{lemma} \label{LEkernel}
\begin{equation} \label{EKernel}
K_\varepsilon \left(  x\right)  =
  \frac{1}{2 \sqrt
  {\varepsilon}} e^{- \sqrt {\varepsilon} \vert x \vert}, \ x \in \R.
\end{equation}
\end{lemma}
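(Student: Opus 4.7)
The plan is to verify that the candidate $K_\varepsilon$ given by \eqref{EKernel} is the fundamental solution of the operator $\varepsilon-\Delta$ on $\R$, i.e.\ that $(\varepsilon-\partial_{xx})K_\varepsilon=\delta_0$ in $\shs'(\R)$. Once this is established, for any $\varphi\in L^2(\R)$ the function $u(x):=(K_\varepsilon*\varphi)(x)$ lies in $L^2(\R)$ (since $K_\varepsilon\in L^1(\R)$), and a distributional computation gives $(\varepsilon-\Delta)u=\varphi$. Because $\varepsilon-\Delta$ is injective on $L^2(\R)$ (for instance by Plancherel, the Fourier symbol $\varepsilon+\xi^2$ is strictly positive), this forces $B_\varepsilon(\varphi)=K_\varepsilon*\varphi$, which is exactly formula \eqref{eq kernelbis} with the claimed kernel.

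To check the fundamental-solution property, I would compute the first distributional derivative of $K_\varepsilon$, obtaining
\[
K_\varepsilon'(x)=-\tfrac{1}{2}\,\mathrm{sgn}(x)\,e^{-\sqrt{\varepsilon}\,|x|},
\]
which is a bounded function with a jump at $0$ of size $K_\varepsilon'(0^+)-K_\varepsilon'(0^-)=-1$. Differentiating once more, the smooth part contributes $\tfrac{\sqrt{\varepsilon}}{2}e^{-\sqrt{\varepsilon}\,|x|}=\varepsilon\,K_\varepsilon(x)$, while the jump of $K_\varepsilon'$ contributes $-\delta_0$ to the distributional derivative. Hence $K_\varepsilon''=\varepsilon K_\varepsilon-\delta_0$ and $(\varepsilon-\partial_{xx})K_\varepsilon=\delta_0$, as required.

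An alternative, equivalent route is via Fourier transform: one verifies $\widehat{K_\varepsilon}(\xi)=(\varepsilon+\xi^2)^{-1}$ by a direct computation (splitting the integral on $\R_\pm$ and evaluating two elementary exponential integrals), which is precisely the symbol of $(\varepsilon-\Delta)^{-1}$, so that $K_\varepsilon*\varphi$ and $B_\varepsilon(\varphi)$ have identical Fourier transforms and are therefore equal.

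There is no real obstacle here; the only slightly delicate point is making sure the jump contribution at $0$ is accounted for correctly in the distributional second derivative, and noting that the choice of the decaying exponentials on each side of the origin (together with the symmetry $K_\varepsilon(-x)=K_\varepsilon(x)$) is the one compatible with $K_\varepsilon\in L^1(\R)$, so that the convolution defining $B_\varepsilon$ is meaningful for all $\varphi\in L^2(\R)$.
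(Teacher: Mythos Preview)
Your proof is correct, but it takes a different route from the paper. The paper quotes the general heat-kernel (subordination) representation from Stein,
\[
K_\varepsilon(x)=\frac{1}{(4\pi)^{1/2}}\int_0^\infty t^{-1/2}\,e^{-\frac{|x|^2}{4t}-\varepsilon t}\,dt,
\]
and then says the closed form follows by ``standard calculus'' (i.e.\ evaluating this Gaussian-type integral). Your approach instead \emph{guesses} the answer and verifies it by checking $(\varepsilon-\partial_{xx})K_\varepsilon=\delta_0$ through a direct distributional-derivative computation (or, equivalently, by matching Fourier symbols). Your argument is more elementary and fully self-contained, requiring no external reference and no nontrivial integral evaluation; the paper's approach, on the other hand, situates the formula within the general theory of Bessel potentials (valid in all dimensions), at the cost of invoking a result from Stein and leaving the final integral to the reader. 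Either is perfectly adequate here.
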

\begin{proof} \
From Def. 6.27 in \cite{stein}, we get  
\begin{equation} \label{EKernel1}
K_\varepsilon \left(  x\right)  = \frac{1}{  \left(
4\pi\right)  ^{1/2}}\int_{0}^{\infty}t^{-\frac{1}{2}}e^{-\frac{\left|
x\right|  ^{2}}{4t} - \varepsilon t} dt 
\end{equation}
The result follows by standard calculus.
\end{proof}
Clearly,  if $\varphi \in C^2 (\R) \bigcap \shs'(\R)$,
then $(\varepsilon  - \Delta) \varphi$
 coincides with the classical associated PDE operator.




 \begin{lemma} \label{L3.5}
Let $\varepsilon > 0, \ m \in \shm(\R)$.
There is a unique solution $v_\varepsilon \in C_b(\R) 
\bigcap (\bigcap_{p \ge 1} 
L^p(\R))$  of
\begin{equation} \label{E3.5b}
  \varepsilon v_\varepsilon - \Delta v_\varepsilon = m
\end{equation}
in the sense of distributions given by
\begin{equation} \label{Emvar}
 v_\varepsilon (x) := \int_\R K_\varepsilon (x-y) dm(y), \quad x \in \R.
\end{equation} 
Moreover it fulfills
\begin{equation} \label{E3.5cc}
\sup_x \sqrt \varepsilon \vert v_\varepsilon (x) \vert \le 
\frac{
\Vert m \Vert _{\rm var}}{2},
\end{equation}
where $\Vert m \Vert  _{\rm var} $ denotes the total variation norm.
In addition,
the derivative $ v_\varepsilon'$ has 
a bounded cadlag version which is locally of
bounded variation.
\end{lemma}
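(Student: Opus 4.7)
The plan is to take $v_\varepsilon$ to be the candidate defined by the explicit convolution formula \eqref{Emvar} and then verify in turn the regularity, the sup bound \eqref{E3.5cc}, the distributional PDE \eqref{E3.5b}, uniqueness, and the statement about the derivative. For the regularity, Lemma \ref{LEkernel} yields $\Vert K_\varepsilon \Vert_\infty = 1/(2\sqrt\varepsilon)$, from which \eqref{E3.5cc} is immediate by $|v_\varepsilon(x)|\le \Vert K_\varepsilon \Vert_\infty \Vert m \Vert_{\rm var}$. Since $K_\varepsilon\in L^1(\R)\cap L^\infty(\R)\subset L^p(\R)$ for every $p\ge 1$, Minkowski's integral inequality gives $\Vert v_\varepsilon \Vert_{L^p} \le \Vert K_\varepsilon \Vert_{L^p}\,\Vert m \Vert_{\rm var}$, while continuity of $v_\varepsilon$ follows from the continuity of $K_\varepsilon$ together with dominated convergence, so $v_\varepsilon\in C_b(\R)\cap\bigcap_{p\ge 1}L^p(\R)$.

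To show that $v_\varepsilon$ solves \eqref{E3.5b} in $\shs'(\R)$, I would apply Fubini together with the fact that $K_\varepsilon$ is the Green function of $\varepsilon - \Delta$, to get, for every $\varphi\in C_c^\infty(\R)$,
\[
\langle v_\varepsilon,(\varepsilon-\Delta)\varphi\rangle
= \int_\R \int_\R K_\varepsilon(x-y)\,(\varepsilon-\Delta)\varphi(x)\,dx\,dm(y)
= \int_\R \varphi(y)\,dm(y),
\]
where the last equality uses $(\varepsilon-\Delta)(K_\varepsilon*\varphi)=\varphi$ for $\varphi \in C_c^\infty(\R)$ (which is clear from Lemma \ref{LEkernel}, or from the fact that $\widehat{K_\varepsilon}(\xi)=(\varepsilon+|\xi|^2)^{-1}$). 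For uniqueness, any candidate solution $w$ in the prescribed class lies in particular in $L^2(\R)\subset\shs'(\R)$, and $(\varepsilon-\Delta)w=0$ in $\shs'$ translates, via the Fourier transform, into $(\varepsilon+|\xi|^2)\widehat{w}=0$; since $\varepsilon+|\xi|^2>0$, this forces $\widehat w=0$, whence $w=0$.

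For the derivative I would differentiate \eqref{Emvar} under the integral. Away from zero, $K_\varepsilon'(z)=-\frac12\,\mathrm{sgn}(z)\,e^{-\sqrt\varepsilon|z|}$, and $K_\varepsilon'$ has a jump of size $-1$ at $z=0$; fixing the right-continuous convention $\mathrm{sgn}(0):=1$ then yields the cadlag representative
\[
v_\varepsilon'(x) = -\frac12\int_\R \mathrm{sgn}(x-y)\,e^{-\sqrt\varepsilon|x-y|}\,dm(y),
\]
which is bounded by $\frac12 \Vert m \Vert_{\rm var}$. Since $K_\varepsilon'$ has total variation at most $2$ on all of $\R$, applying the identity $v_\varepsilon'(b)-v_\varepsilon'(a) = \int_\R [K_\varepsilon'(b-y)-K_\varepsilon'(a-y)]\,dm(y)$ and taking suprema over partitions gives $\mathrm{Var}_{[a,b]}(v_\varepsilon')\le 2\Vert m \Vert_{\rm var}<\infty$, which proves the local bounded variation claim. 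The only slightly subtle step I anticipate is the uniqueness proof, where one must recall the embedding $L^p\hookrightarrow \shs'$ to legitimately pass to the Fourier side; everything else reduces to routine manipulations using the explicit form of $K_\varepsilon$ from Lemma \ref{LEkernel}.
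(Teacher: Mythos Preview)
Your proof is correct and follows essentially the same line as the paper's: uniqueness via Fourier transform in $\shs'(\R)$, existence and the sup bound from the explicit kernel of Lemma~\ref{LEkernel}, continuity by dominated convergence, and $L^p$-membership from the kernel estimates.

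The one place where you and the paper diverge is the treatment of $v_\varepsilon'$. You differentiate the convolution \eqref{Emvar} directly, using the explicit a.e.\ derivative $K_\varepsilon'(z)=-\tfrac12\,\mathrm{sgn}(z)e^{-\sqrt\varepsilon|z|}$ and reading off the cadlag property and the bounded-variation bound from the total variation of $K_\varepsilon'$. The paper instead exploits the equation itself: since $v_\varepsilon''=\varepsilon v_\varepsilon - m$ in the sense of distributions and $v_\varepsilon\in L^1(\R)$, one integrates to obtain the representative
\[
v_\varepsilon'(x)=\varepsilon\int_{-\infty}^{x} v_\varepsilon(y)\,dy - m\bigl(\,]-\infty,x]\,\bigr),
\]
from which the cadlag and locally-BV properties are immediate (the first term is continuous, the second is a distribution function of a finite signed measure). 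Both arguments are short and elementary; yours gives slightly sharper constants ($\Vert v_\varepsilon'\Vert_\infty\le\tfrac12\Vert m\Vert_{\rm var}$ and global, not just local, bounded variation), while the paper's formula is what is actually used later in the proof of Theorem~\ref{P3.5} when integrating by parts.
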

In the sequel, in analogy with \eqref{eq kernelbis}, that solution will
 be denoted by $B_\varepsilon m$.

\begin{proof}
\
Uniqueness follows from an obvious application 
of Fourier transform. In fact, it holds even
in $\shs'(\R)$.

 $v_\varepsilon$ given by \eqref{Emvar},
clearly satisfies (\ref{E3.5b}) in the sense of
distributions. By Lebesgue's dominated convergence theorem
and because $K_\varepsilon$ is a bounded continuous function,
 it follows that  $ v_\varepsilon \in C_b(\R)$.

By Lemma \ref{LEkernel} we have 
\begin{equation} \label{E3.5c}
\sup_x \vert v_\varepsilon (x) \vert \le 
\frac{1}{2 \sqrt \varepsilon }  
\Vert m \Vert _{\rm var},
\end{equation}
 By Fubini's
theorem and (\ref{EKernel}),  it follows that $v_\varepsilon \in
L^1(\R)  $.
Hence $v_\varepsilon \in  L^p (\R), \forall p \ge 1$, because
$v_\varepsilon$ is bounded.

Since $ v''_\varepsilon $  equals  $\varepsilon
v_\varepsilon  - m $ 
in the sense of distributions, after integration, we can show that
$$ v_\varepsilon'(x) =
\varepsilon
 \int_{-\infty}^x v_\varepsilon (y) dy -
m(]-\infty, x]),$$
for $dx$-a.e. $ x \in \R$. In particular, $v_\varepsilon$
 has a bounded cadlag version which is locally
 of bounded variation and
$$ \Vert v_\varepsilon' \Vert_\infty \le 
 \varepsilon \Vert v
\Vert_{L^1(\R)} + \Vert m \Vert_{\rm var}.$$
\end{proof}

We now recall  some  basic notions from the analysis
of monotone operators.
More information can also be found 
 for instance in \cite{sho97}. See also  
\cite{B93, Br77}.

Let $E$  be a general Banach space. 

One of the most basic notions of this paper 
is the one of a multivalued function (graph).
A {\bf multivalued function} (graph) $\beta$ on $E$ will be a subset
 of $E \times E$.
It can be seen, either as a family of couples $(e,f), e, f \in E$ and we will
write $f \in \beta(e)$ or as a function $\beta: E \rightarrow \shp(E)$.

We start with a definition in the case $E = \R$.

\begin{defi} \label{D2.4b}
A  multivalued function  $\beta$  defined on $\R$ 
with values in subsets of $\R$ is 
said to be {\bf monotone} if
$(x_1 - x_2) (y_1 - y_2) \ge 0$
for all  $ x_1, x_2 \in \R$,
 $y_i \in \beta(x_i), i = 1,2$.

We say that $\beta$ is {\bf maximal 
monotone} if it is monotone and  there exists $\lambda > 0$ such that
$\beta + \lambda id$ is surjective, i.e.
$$ {\cal R} (\beta + \lambda id) 
:=  \bigcup_{x \in \R} (\beta (x) + \lambda x) = \R.$$
\end{defi}

We recall that one  motivation of this paper is the case
  where $\beta(u) = H(u - e_c) u$.

Let us consider a monotone function $\psi$.
Then all the discontinuities are of jump type.
At every discontinuity point $x$  of $\psi$,
it is possible to {\it complete} $\psi$, producing 
a multi-valued function, 
by setting $\psi(x) = [\psi(x-), \psi(x+)]$.

Since $\psi$ is a monotone function,
the corresponding multivalued   function will be, of course, also
monotone.

Now we come back to the case of our general Banach space $E$ with norm
$\Vert \cdot \Vert.$
An operator $T: E \rightarrow E$ is said to be a {\bf contraction}
 if it is Lipschitz of norm less or equal to 1 and 
 $T(0) =  0$.

\begin{defi}
 A map $ A: E \rightarrow E$, 
or more generally
a multivalued map $A: E \rightarrow  \shp(E)$
is said to be {\bf accretive} if
for all $f_1, f_2, g_1, g_2  \in E$ 
such that $g_i \in A f_i, i = 1,2$,
 we have 
$$ \Vert f_1 - f_2 \Vert  \le  \Vert f_1 - f_2 + \lambda (g_1 - g_2)
 \Vert, $$
for any $\lambda  >    0$.
\end{defi}
This is equivalent to saying the following:  for any $\lambda   >    0 $,
 $(1 + \lambda A)^{-1}$ is 
a contraction for any  $\lambda > 0$ on  $Rg(I +\lambda A)$.
We remark that a contraction is necessarily single-valued.

\begin{rem} \label{R27a}
Suppose that $E$ is a Hilbert space equipped with the scalar product
$(\;,\, )_H$.
 Then  $A$ is accretive if and only if it is {\bf monotone} i.e.
$ ( f_1 - f_2,  g_1 - g_2)_H \ge 0$ for any $f_1, f_2, g_1, g_2  \in E$ 
such that $g_i \in A f_i, i = 1,2$, see  Corollary 1.3 of
\cite{sho97}.
 \end{rem}

\begin{defi} \label{D27a}
 A monotone  map $ A: E \rightarrow E$  (possiblly multivalued)
is said to be 
{\bf m-accretive} if for some $\lambda  >  0 $, 
$A + \lambda I$ is surjective (as a graph in $E \times E$).

\end{defi}
\begin{rem} \label{Rsho}
So, $A$ is m-accretive, if and  only if 
for all $\lambda$ strictly positive, 
$(I + \lambda A)^{-1}$ is a contraction on $E$.

\end{rem}

Now, let us consider the case $E = L^1(\R)$.
The following  is  taken from \cite{BeC81}, Section 1.

\begin{prop} \label{pr1}
 Let $\beta: \R \rightarrow \R$ be a monotone (possibly multi-valued)
map such that  the corresponding graph is m-accretive.
Suppose that $\beta(0) = 0$.

Let  $  f \in E = L^1(\R)$.

\begin{enumerate}
\item
There is a unique $u \in L^1(\R)$ for which there is 
$w \in L^1_{\rm loc}(\R)$ such that 
\begin{equation} \label{pe1} 
u - \Delta w = f \quad {\rm in} \quad 
\shd'(\R), \quad w(x) \in \halb \beta(u(x)), \quad  {\rm for \ a.e. } \quad
x \in \R,
\end{equation}
see Proposition 2 of  \cite{BeC81}.
\item It is then possible to define a (multivalued) operator
$A:=  A_\beta: E \rightarrow E$ where
$ D(A) $  is the set  
of $ u \in L^1(\R) $ for which  there is $w \in L^1_{\rm loc}(\R)$
such that  $w(x) \in \halb \beta(u(x)) $ for \ a.e. \ $x \in \R$ and $\Delta w 
\in L^1 (\R)$. 
For $u \in  D(A)$, we set 
$$ A u = \{ - \halb w \vert w \ {\rm  as \ in \ the \  definition \ of \ }
D(A)\}.$$ 
This is a consequence of the remarks following Theorem 1 in   \cite{BeC81}.

In particular, if $\beta$ is single-valued, $A u = - \halb \Delta \beta(u)$.
We will adopt this notation also if $\beta$ is multi-valued.

\item The operator  $A$ defined in 2. above is m-accretive
on $E = L^1(\R)$, see Proposition 2 of  \cite{BeC81}.

\item
We set  $J_\lambda = (I + \lambda A)^{-1}$, which is a
  single-valued operator.
If $f \in L^\infty (\R) $, then $ \Vert  J_\lambda f \Vert_\infty
  \le \Vert f \Vert_\infty$, 
see Proposition 2 (iii) of  \cite{BeC81}.
In particular, for every  positive integer  $n$,
$ \Vert J_\lambda^n f \Vert_\infty  \le \Vert f \Vert_\infty$.
\end{enumerate}
\end{prop}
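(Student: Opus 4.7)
The plan is to follow the Bénilan--Crandall approach cited in the statement, so I will structure the proof around (a) solving the resolvent equation $u - \Delta w = f$ for fixed $f \in L^1$, (b) extracting from this an $m$-accretive operator, and (c) getting the $L^\infty$-contractivity of the resolvent. First I would dispose of uniqueness in item~1, which is essentially a monotonicity argument: if $(u_i, w_i)$, $i = 1,2$, are two solutions to \eqref{pe1} with the same $f$, then $(u_1 - u_2) - \Delta(w_1 - w_2) = 0$ in $\shd'(\R)$, and testing (after mollification) against $\rho_\delta(w_1 - w_2)$ for a smooth approximation $\rho_\delta$ of the sign function, integrating by parts and exploiting $w_i \in \halb \beta(u_i)$ together with the monotonicity of $\beta$ forces $w_1 = w_2$ a.e.\ and then $u_1 = u_2$ via the PDE.

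For existence I would regularize $\beta$ by its Yosida approximation $\beta_\varepsilon := \varepsilon^{-1}(I - (I+\varepsilon\beta)^{-1})$, which is a monotone Lipschitz single-valued function vanishing at $0$. The equation $u_\varepsilon - \halb \Delta \beta_\varepsilon(u_\varepsilon) = f$ is then a quasilinear elliptic problem of coercive monotone type and can be solved by standard arguments (e.g.\ the operator $v \mapsto v - \halb \Delta \beta_\varepsilon(v)$ is $m$-accretive on $L^1$ since $\beta_\varepsilon$ is Lipschitz and monotone; alternatively, invert $(\varepsilon - \Delta)$ with the kernel of Lemma~\ref{LEkernel} and apply a fixed point argument). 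Setting $w_\varepsilon := \halb \beta_\varepsilon(u_\varepsilon) \in L^1_{\rm loc}$, the a priori $L^1$-estimate on $u_\varepsilon$ (again using a sign-approximation test function) and the identity $\Delta w_\varepsilon = u_\varepsilon - f$ give compactness allowing one to extract a limit $(u,w)$ with $u \in L^1$, $w \in L^1_{\rm loc}$, $\Delta w \in L^1$ and $u - \Delta w = f$; passing to the limit in the maximal monotone inclusion $w_\varepsilon \in \halb\beta_\varepsilon(u_\varepsilon)$ — this is the delicate point, carried out via the demiclosedness of the graph of a maximal monotone operator — yields $w \in \halb\beta(u)$ a.e.

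Item~2 is then just a matter of packaging: the set $D(A)$ and the formula $Au = -\halb w$ are well-defined thanks to item~1 (the $u$ determines the relevant $w$ essentially uniquely on $\{u \in \text{jump of }\beta\}$ up to the freedom within $\halb\beta(u)$, but the class of admissible $w$'s is exactly what makes $A$ single-valued or multivalued in a consistent way). Item~3 amounts to showing that for every $f \in L^1$ the resolvent equation $u + A u \ni f$, i.e.\ $u - \Delta w = f$ with $w \in \halb\beta(u)$, has a (unique) solution and that $(I+A)^{-1}$ is an $L^1$-contraction; this is precisely what items~1 combined with a two-sided version of the sign-test argument (Kato's inequality) give, comparing two solutions with data $f_1, f_2$.

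Finally for item~4, the single-valuedness of $J_\lambda$ is just uniqueness from item~1, and the $L^\infty$-bound $\|J_\lambda f\|_\infty \le \|f\|_\infty$ is the standard maximum-principle estimate: if $M := \|f\|_\infty$, then testing $u - \Delta w = f$ (with $u = J_\lambda f$ and $w \in \frac{\lambda}{2}\beta(u)$) against a smooth approximation of $(u - M)^+$ and using that $\beta$ is monotone with $\beta(0)=0$ (so $w \ge 0$ on $\{u > 0\}$ and in particular on $\{u > M\}$) forces $(u-M)^+ = 0$, and symmetrically for the lower bound; the iterated bound $\|J_\lambda^n f\|_\infty \le \|f\|_\infty$ follows by induction. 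The main obstacle is the passage to the limit in the multivalued inclusion $w_\varepsilon \in \halb \beta_\varepsilon(u_\varepsilon) \to w \in \halb\beta(u)$, which requires either strong $L^1_{\rm loc}$ convergence of $u_\varepsilon$ (obtained from the equation and compactness of $(\varepsilon - \Delta)^{-1}$) or a Minty-type weak closure argument for the maximal monotone graph $\halb\beta$.
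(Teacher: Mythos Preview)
The paper does not actually prove Proposition~\ref{pr1}; each of the four items is stated with an explicit citation to B\'enilan--Crandall \cite{BeC81} (Proposition~2 and the remarks following Theorem~1 there), and no independent argument is supplied in the paper itself. Your sketch is therefore not competing with a proof in the paper but rather reconstructing the content of the cited reference, and it does so along the standard lines: Yosida regularization of $\beta$ for existence, Kato-type sign-testing for the $L^1$-contraction and accretivity, and a maximum-principle comparison for the $L^\infty$ bound on $J_\lambda$.

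One small imprecision worth flagging: in your uniqueness argument for item~1 you say the sign-test ``forces $w_1 = w_2$ a.e.\ and then $u_1 = u_2$ via the PDE''. The proposition only asserts uniqueness of $u$; when $\beta$ is genuinely multivalued the selection $w$ need not be unique (indeed item~2 defines $A$ as a multivalued operator precisely for this reason). The cleaner route is the $L^1$-contraction estimate $\|u_1 - u_2\|_{L^1} \le \|f_1 - f_2\|_{L^1}$, obtained by testing $(u_1-u_2) - \Delta(w_1-w_2) = f_1 - f_2$ against an approximation of $\mathrm{sign}(w_1-w_2)$ and using that $(u_1-u_2)\,\mathrm{sign}(w_1-w_2) = |u_1-u_2|$ a.e.\ by monotonicity of $\beta$; uniqueness of $u$ then follows by taking $f_1=f_2$. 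Apart from this ordering issue your outline is sound.
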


Let us summarize some important results of
 the theory of non-linear semigroups, see for
instance \cite{E77, B76, B93, BeBrC75} or the
 more recent monograph \cite{sho97},
which we shall use below.
Let $A: E \rightarrow E$ be a (possibly multivalued) m-accretive operator.
We consider the equation 
\begin{equation} \label{ENonLinEv}
0 \in u'(t) + A (u(t)),  \quad  0 \le  t \le T.
\end{equation}
A function $u: [0,T]  \rightarrow E$, which is absolutely continuous
such that for a.e. $t$, $u(t,\cdot) \in D(A)$ and 
fulfills (\ref{ENonLinEv}) in the following sense, is called
{\bf strong solution}.

There exists $\eta: [0,T] \rightarrow E$, Bochner integrable, such
that $\eta(t) \in A(u(t))$ for a.e. $t \in [0,T]$ and 
$$ u(t) = u_0 + \int_0^t \eta (s) ds, \quad  0  <   t \le T. $$

A weaker notion for (\ref{ENonLinEv}) is the so-called 
{\bf $C^0$- solution}, see Chapter IV.8 of \cite{sho97}.
In order to introduce it, 
 one first defines the notion of $\varepsilon$-solution for
 (\ref{ENonLinEv}).

An {\bf $\varepsilon$-solution} is a discretization 
$$ \shd = \{0= t_0 < t_1 < \ldots < t_N = T \} $$
and an $E$-valued step function 
$$ u^\varepsilon (t) = \left \{
 \begin{array}  {ccc}
 u_0 &: & t= t_0 \\
u_j \in D(A)  &: & t \in ]t_{j-1}, t_j ]
\end{array}
\right.
$$ for which
$ t_j - t_{j-1} \le \varepsilon $ for 
$1 \le j \le N$,
and 
$$ 0 \in \frac{u_j - u_{j-1}}{t_j - t_{j-1}} + A u_j, 1 \le j \le N. $$
We remark that, since $A$ is m-accretive, $u^\varepsilon$ is determined by
$\shd$ and $u_0$, see Proposition \ref{pr1} 1.
\begin{defi} \label{DC0Sol}
A {\bf $C^0$- solution} of (\ref{ENonLinEv}) is a function $u \in C([0,T]; E)$
such that for every $\varepsilon >  0$, there is an
$\varepsilon$-solution  $u^\varepsilon$ of (\ref{ENonLinEv}) with
$$ \Vert u(t) - u^\varepsilon (t) \Vert \le \varepsilon, \quad 0 \le t \le T.$$
 \end{defi}

\begin{prop} \label{Panls}
Let $A$ be an m-accretive (multivalued) operator  on a Banach
space $E$.
We set again $J_\lambda : = (I + \lambda A)^{-1}, \lambda > 0  $.
 Suppose  $u_0 \in \overline{D(A)} $. Then: 
\begin{enumerate}
\item 
 There is a unique  $C^0$- solution $u: [0,T]
\rightarrow E$  of   (\ref{ENonLinEv})
\item  
 $u(t) = \lim_{n \rightarrow \infty} J_{\frac{t}{n}}^n u_0$
uniformly in $t \in[0,T]$.

\end{enumerate}
\end{prop}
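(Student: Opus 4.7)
The plan is to follow the classical Crandall--Liggett program, which simultaneously gives existence of a $C^0$-solution, uniqueness, and the exponential formula $u(t)=\lim_{n\to\infty} J_{t/n}^n u_0$. The starting observation is that m-accretivity of $A$ makes the construction of $\varepsilon$-solutions completely canonical: once a partition $\shd=\{0=t_0<\cdots<t_N=T\}$ with mesh $\le\varepsilon$ is fixed, the defining relation in Definition~\ref{DC0Sol} forces $u_j=J_{t_j-t_{j-1}}u_{j-1}$, and each $J_\lambda$ is a single-valued contraction defined on all of $E$ (Remark~\ref{Rsho}). Thus an $\varepsilon$-solution exists and is uniquely determined by $(\shd,u_0)$.

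The core of the proof is a two-parameter Cauchy estimate. First I would prove it for $u_0\in D(A)$: fix some $v_0\in Au_0$ and show, by induction on the grid indices $(i,j)$, the Kobayashi-type inequality
\begin{equation*}
\|u^\varepsilon_i-u^{\varepsilon'}_j\|\le \|v_0\|\,\Bigl(\bigl(t_i-s_j\bigr)^2+t_i\,\varepsilon+s_j\,\varepsilon'\Bigr)^{1/2},
\end{equation*}
with $\{t_i\}$ and $\{s_j\}$ the two partitions. The induction uses only the contractivity of $J_\lambda$ and the resolvent identity $J_\lambda x=J_\mu\!\bigl(\tfrac{\mu}{\lambda}x+(1-\tfrac{\mu}{\lambda})J_\lambda x\bigr)$, which in turn follows from the definition of $J_\lambda$ and m-accretivity. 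Specialising to $\varepsilon,\varepsilon'\to 0$ and $t_i\to t$, $s_j\to t$ shows that $u^\varepsilon(t)$ is Cauchy uniformly in $t\in[0,T]$. Calling the limit $u(t)$, we obtain $u\in C([0,T];E)$ with $\|u(t)-u^\varepsilon(t)\|\le C(A,u_0)\sqrt{\varepsilon}$ (up to constants), which is stronger than what Definition~\ref{DC0Sol} demands, so $u$ is a $C^0$-solution. For general $u_0\in\overline{D(A)}$, I would approximate $u_0$ by $u_0^{(k)}\in D(A)$; contractivity of each $J_\lambda$ propagates through the scheme to yield $\|u^\varepsilon-\widetilde u^\varepsilon\|_\infty\le\|u_0-u_0^{(k)}\|$, and one concludes existence and the exponential formula by a standard diagonal argument.

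Uniqueness of the $C^0$-solution is obtained from the same estimate: if $u$ and $\tilde u$ are two $C^0$-solutions for the same $u_0$, then for any $\varepsilon>0$ one can approximate both by $\varepsilon$-solutions and apply the Kobayashi estimate comparing these discrete solutions, concluding $\|u(t)-\tilde u(t)\|\le C\sqrt\varepsilon$ for every $\varepsilon>0$. Finally, item~(ii) of the proposition is just the specialisation of the construction to the uniform partition $t_j=jt/n$: the associated $\varepsilon$-solution evaluated at $t$ is precisely $J_{t/n}^n u_0$, and passing to the limit gives $u(t)=\lim_{n\to\infty}J_{t/n}^n u_0$, uniformly on $[0,T]$ by what was shown above.

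The main obstacle is, as usual, the Crandall--Liggett/Kobayashi estimate itself: the inductive step requires a careful bookkeeping of the discrete times and a clever convex combination coming from the resolvent identity, and it is the single point where m-accretivity is used in a non-trivial way beyond just well-definedness of $J_\lambda$. Everything else (existence of the scheme, density argument for $u_0\in\overline{D(A)}$, uniqueness, exponential formula) is a relatively routine consequence once this estimate is in hand. Since the statement is entirely standard, I would at this point simply refer the reader to Chapter~IV of \cite{sho97} (or to \cite{B76, B93}) for the detailed verification of the Cauchy estimate.
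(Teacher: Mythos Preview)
Your proposal is correct and ultimately aligns with the paper's own treatment: the paper proves this proposition simply by citing Corollary~IV.8.4 and Theorem~IV.8.2 of \cite{sho97}, with no further argument. You provide in addition a sketch of the underlying Crandall--Liggett/Kobayashi argument before referring to the same source, which is more informative but not required.
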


\begin{proof} \

1) is stated in
Corollary IV.8.4. of \cite{sho97} and
 2)  is contained in Theorem IV 8.2 of \cite{sho97}.
\end{proof}

The notion of $C^0$-(or mild) solution needs to be introduced
since  the dual  $E^*$ of $E= L^1(\R)$ is not uniformly convex.
If $E^*$ were indeed uniformly convex, we could have stayed with strong
solutions. In fact, according to  Theorem IV 7.1 of \cite{sho97},
for a given $u_0 \in D(A)$, there  would exist a (strong)
solution $u: [0,T] \rightarrow E$ to (\ref{ENonLinEv}),
which is a simpler notion to deal with.
For the comfort of the reader we recall the following properties.
\begin{itemize}
\item A strong solution is a $C^0$-solution,
by Proposition 8.2 of \cite{sho97}.
\item   Theorem 1.2 of
\cite{CE75} says the following.
Given  $u_0 \in \overline{D(A)}$ and given a sequence $(u_0^n)$  in  
 $ D(A)$ converging to $u_0$, 
then, the sequence of the corresponding strong solutions  $(u_n)$  
 converges to the unique  $C^0$-solution of the same equation.
\end{itemize}

%

\section{A porous media equation with singular coefficients}

\setcounter{equation}{0}

In this section, we will provide first an existence and uniqueness 
result for solutions to 
the parabolic deterministic equation (\ref{E1.1}) in the sense
of distributions for multi-valued m-accretive $\beta$. 
The proof is partly  based 
on the theory of non-linear semigroups, see \cite{BeC81} 
for the case when $\beta$ is continuous.

However, the most important result of this section, is an 
 existence and uniqueness result for a ``non-degenerate'' linear
equation for measures, see (\ref{E1.3}).
 This technical result will be crucial for identifying
the law of the process appearing in the probabilistic representation
(\ref{E1.2}).

We suppose that $\beta$ has the same properties as 
those given in the introduction.
However, $\beta$ is allowed to be multi-valued, hence
m-accretive, as a graph, in the sense of Definition \ref{D2.4b}.
Furthermore, generalizing Assumption \ref{E1.0} we shall assume 
the following.
\begin{Assumption} \label{H3.0}
Let $\beta: \R \rightarrow 2^\R$ be an m-accretive graph
with the property  that  there exists  $c > 0$  such that
\begin{equation} \label{E3.0}
 w \in \beta(u) \Rightarrow \vert w \vert \le c \vert u \vert. 
\end{equation}
\end{Assumption}
\begin{rem} \label{RE3.0}
In particular, $\beta(0) = 0$ and $\beta$ is continuous at zero.
 We use again the representation $\beta(u) = \Phi^2 (u) u$ with
$\Phi$ being a non-negative bounded multi-valued map 
$\Phi: \R \rightarrow \R$.
\end{rem}
\begin{rem} \label{R3.0} As mentioned before, if $\beta: \R \rightarrow \R$
is monotone (possibly discontinuous), it is possible to complete
$\beta$ into a monotone graph.
For instance, if $\Phi (x) = H(x-e_c)$, then
 \begin{equation*}
\beta(x) =  
\left \{ 
\begin{array}{ccc}
0      &:& x < e_c \\
 {[0,e_c]}  &:& x = e_c\\
x       &:& x > e_c
\end{array}
       \right.
\end{equation*}
Since the function $\beta$ is monotone, the corresponding graph is monotone.
Moreover $\beta + id$ is surjective so that, by definition, 
$\beta$ is m-accretive.
\end{rem}

\begin{prop}\label{P3.1}
Let $u_0 \in L^1(\R) \bigcap  L^\infty(\R) $
Then there is a unique solution in the sense of distributions 
$ u \in  (L^1 \bigcap L^\infty) ([0,T] \times \R) $ of
\begin{equation} \label{E3.1a}
\left \{
\begin{array}{ccc}
\partial_t u &\in &  \frac{1}{2} \partial_{xx}^2(\beta(u)), \\
u(t,x)& =& u_0(x),  
\end{array}
\right.
\end{equation}
that is, there exists a unique couple
$(u, \eta_u)  \in 
( (L^1 \bigcap L^\infty) ([0,T] \times \R))^2$
such that
\begin{eqnarray} \label{E3.1}
\int u(t,x) \varphi(x) dx &=& \int u_0(x) \varphi(x) dx + \frac{1}{2}
 \int_0^t ds \int \eta_u(s,x) \varphi''(x) dx, \nonumber \\
&&  \forall \varphi \in \shs(\R) \quad {\rm and} \\
\eta_u(t,x) &\in & \beta(u(t,x))\ {\rm for} \ dt \otimes  dx-{\rm a.e.}
 \ (t,x) \in 
    [0,T] \times \R.  \nonumber
\end{eqnarray}
Furthermore, $\Vert u(t, \cdot) \Vert_\infty  \le \Vert u_0 \Vert_\infty$
 for every 
$t \in [0,T]$ and there is a unique version of $u$  such that 
$u \in C([0,T]; L^1(\R))$ ($\subset L^1([0,T] \times \R))$.
\end{prop}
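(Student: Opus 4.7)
The plan is to recast \eqref{E3.1a} as the abstract Cauchy problem
\[
 0 \in u'(t) + A_\beta u(t), \quad u(0) = u_0,
\]
on $E = L^1(\R)$, where $A_\beta$ is the (possibly multivalued) operator constructed in Proposition \ref{pr1}. By Proposition \ref{pr1}(3), $A_\beta$ is m-accretive on $L^1(\R)$, and since $\beta(0)=0$, smooth compactly supported functions in $D(A_\beta)$ form a dense subset, so $\overline{D(A_\beta)} = L^1(\R)$. In particular $u_0 \in \overline{D(A_\beta)}$. Applying Proposition \ref{Panls} yields a unique $C^0$-solution $u \in C([0,T]; L^1(\R))$ together with the exponential formula $u(t) = \lim_n J_{t/n}^n u_0$ uniformly in $t$.

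Next I would propagate the $L^\infty$-bound. By Proposition \ref{pr1}(4), $\|J_\lambda^n u_0\|_\infty \le \|u_0\|_\infty$ for all $n$ and $\lambda > 0$; taking $\lambda = t/n$ and passing to an a.e.\ convergent subsequence of $J_{t/n}^n u_0 \to u(t)$ in $L^1$ gives $\|u(t,\cdot)\|_\infty \le \|u_0\|_\infty$ for every $t \in [0,T]$. In particular $u \in (L^1 \cap L^\infty)([0,T]\times\R)$.

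To produce the distributional formulation \eqref{E3.1}, I would work at the level of the $\varepsilon$-solutions. Each discretization step satisfies $u_j - (t_j - t_{j-1}) \cdot \tfrac{1}{2}\Delta w_j = u_{j-1}$ with $w_j \in \beta(u_j)$ a.e., which by telescoping gives, against any $\varphi \in \shs(\R)$,
\[
\int u^\varepsilon(t,x)\varphi(x)\,dx = \int u_0(x)\varphi(x)\,dx + \tfrac{1}{2}\int_0^t\!ds\int w^\varepsilon(s,x)\varphi''(x)\,dx.
\]
Using the $L^\infty$-bound on $u^\varepsilon$ and the growth condition \eqref{E3.0} to control $w^\varepsilon$ in $L^\infty$, I extract a weak-$*$ limit $\eta_u$ for $w^\varepsilon$ and identify $\eta_u(t,x)\in\beta(u(t,x))$ via the demiclosedness of the maximal monotone graph $\beta$ (here the $L^1$-convergence of $u^\varepsilon$ to $u$ is what allows the passage inside $\beta$).

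The delicate step, and the main obstacle, is uniqueness in the distributional class \eqref{E3.1}, since this is strictly weaker than being a $C^0$-solution. Following the Br\'ezis--Crandall method, given two couples $(u^i,\eta_u^i)$, $i=1,2$, with the same initial datum, I would test the difference equation against $\varphi(x) = B_\varepsilon \psi(x) = (\varepsilon - \Delta)^{-1}\psi$ for $\psi \in C_c^\infty(\R)$, use the identity $\tfrac{1}{2}\varphi'' = \tfrac{\varepsilon}{2}\varphi - \tfrac{1}{2}\psi$, and exploit the monotonicity $(\eta_u^1-\eta_u^2)(u^1-u^2)\ge 0$ (valid a.e.\ from \eqref{E3.0} and the graph structure) to derive an $L^1$-contraction after sending $\varepsilon \downarrow 0$. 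This step is the technical heart and rests on the fact that $B_\varepsilon$ behaves well on $L^1 \cap L^\infty$ by Lemma \ref{L3.5}, ensuring all test-function manipulations are justified and that $u^1 = u^2$ follows. Finally, the continuous $L^1$-version of $u$ is already supplied by the $C^0$-solution, so uniqueness in \eqref{E3.1} identifies the distributional solution with the semigroup solution.
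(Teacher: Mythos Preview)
Your overall architecture matches the paper's exactly: construct the $C^0$-solution via the m-accretive operator $A_\beta$, pass the $L^\infty$-bound through the resolvent iterates, upgrade the $\varepsilon$-solutions to the distributional identity, identify $\eta_u \in \beta(u)$ by maximal monotonicity, and invoke Brezis--Crandall for uniqueness. There is, however, one genuine gap. You assert that $\overline{D(A_\beta)} = L^1(\R)$ because ``smooth compactly supported functions in $D(A_\beta)$ form a dense subset''. This is not justified and in general fails under the hypotheses of the proposition: $\beta$ is only an m-accretive graph with linear growth (Assumption~\ref{H3.0}), so for $u \in C_c^\infty(\R)$ the selection $w \in \tfrac12\beta(u)$ need not even be continuous, and $\Delta w$ is merely a distribution (or a measure), not an $L^1$ function. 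Hence $C_c^\infty(\R) \not\subset D(A_\beta)$ in the irregular cases that motivate the paper (e.g.\ $\Phi = H(\cdot - e_c)$).

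The paper closes this gap differently: it shows that for any $u \in L^1(\R)$ the resolvents $u_\lambda := J_\lambda u \in D(A_\beta)$ converge to $u$ weakly in $L^1$ as $\lambda \downarrow 0$, using the linear growth of $\beta$ to see that $\lambda \Delta w_\lambda \to 0$ in $\shd'(\R)$; weak sequential density then upgrades to strong density because $D(A_\beta)$ is convex. This resolvent argument is the robust replacement for your unjustified inclusion $C_c^\infty \subset D(A_\beta)$. Aside from this point your sketch is sound; note that for the identification $\eta_u \in \beta(u)$ the paper works in $L^2([0,T]\times[-K,K])$ (using the $L^\infty$-bound on $u^\varepsilon$ and equi-integrability of $w^\varepsilon$) to apply weak--strong closedness of the realization of $\beta$, which is a slightly sharper formulation of the demiclosedness you invoke. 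For uniqueness the paper simply cites \cite{BrC79}, so your sketch there is more detailed than required.
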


\begin{rem}\label{R3.1}
\begin{enumerate} 
\item 
We remark that, the uniqueness of $u$ determines the uniqueness
 of $\eta \in \beta(u)$ a.e. In fact, for $s, t \in [0,T]$,
we have
\begin{equation} \label{E3.1bis}
 \left(\frac{1}{2} \int_s^t \eta_u(r,\cdot) dr \right)''
= u(t,\cdot) - u(s,\cdot), \ {\rm a.e.}
\end{equation}
Since $\eta_u \in L^1([0,T] \times \R)$, this implies 
 that the function 
$\eta_u$ is  $dt \otimes dx$-a.e. uniquely determined.
Furthermore, since $\beta(0) = 0$ and because $\beta$ is  monotone, 
for $dt \otimes dx$  \  a.e.  \ $(t,x)  \in [0,T] \times \R$
we have 
$$u(t,x) = 0 \Rightarrow \eta_u(t,x) = 0$$
and
$$ u(t,x) \eta_u(t,x) \ge 0.$$ 
\item 
If $\beta$ is continuous then we can take $\eta_u(s,x) = \beta(u(s,x))$.
\item This result applies in the Heaviside case where $\Phi(x) =
  H(x-e_c)$ and in the non-degenerate case  $\Phi(x) =  H(x-e_c) +
  \varepsilon$. 
\end{enumerate}
\end{rem}




\begin{prooff} \ (of Proposition \ref{P3.1}).

We first recall  that by our assumptions, we have
 $(\beta +  \lambda id) (\R) = \R $
for every  $\lambda   > 0$.

\begin{enumerate}
\item The first step is to prove the existence of a $C^0$-solution
of the evolution problem (\ref{ENonLinEv}) in $E = L^1(\R)$, with 
$A$ and $D(A)$ as defined in Proposition \ref{pr1} 2.
Suppose  $\overline{D(A)} = L^1(\R)$.
Then, the existence of a $C^0$-solution 
$ u \in C([0,T];L^1(\R))$
is  a consequence of Proposition
\ref{pr1} 3. and Proposition \ref{Panls} 1.
In particular, $u$  belongs to  $ L^1([0,T] \times \R)$.
\item We  now  prove that
 $ D(A)$  is dense in  in $E = L^1(\R)$.

Let $u \in E$.
 We have to show the existence of a sequence $(u_n)$
 in $D(A)$ converging to $u$ in $E$.
  We set $u_\lambda = (I + \lambda A)^{-1} u$, so that 
 $ u \in   u_\lambda - \lambda \Delta \halb \beta(u_\lambda) $.
  The result follows if we are able to show that
 \begin{equation} \label{Elambda}
 \lim_{\lambda \rightarrow 0} u_\lambda = u, \quad {\rm weakly \ in } \quad E,
 \end{equation}
because then $D(A)$ is weakly sequentially dense in $L^1(\R)$.
In fact we can easily show that $D(A)$ is convex 
and so also its closure.
Hence by Satz 6.12 of \cite{alt} 
$D(A)$ is also weakly sequentially closed 
and so the result would follow.
We continue therefore proving \eqref{Elambda}.
Since $(I + \lambda A)^{-1}$  is a contraction on $E$,  
 $u_\lambda \in E$ and the sequence $(u_\lambda)$ is bounded
 in $L^1(\R)$. Since $u_\lambda \in D(A)$, by definition, there  exists
 $w_\lambda \in L^1_{\rm loc}(\R)$ such that $ w_\lambda(x) \in
 \halb \beta(u_\lambda(x))$ for $dx$-a.e. $x \in \R$, $\Delta w_\lambda
 \in L^1(\R)$ and $u = u_\lambda - \lambda \Delta w_\lambda$.
Since $\beta$ has linear growth,
 $w_\lambda$ also belongs to $E$ for every $\lambda > 0$
  and the sequence $w_\lambda$ is bounded in $E$.
 Consequently, $\lambda w_\lambda$ converges
to zero in $E$ when $\lambda \rightarrow 0$ and it
 follows that  $\lambda \Delta w_\lambda$
converges to zero in  the sense of distributions,
hence $u_\lambda \rightarrow u$ again in the sense of distributions.
Because $(u_\lambda)$ is bounded in $L^1(\R)$, it follows that
 $u_\lambda \rightarrow u$ weakly  in $E = L^1(\R)$,
as $\lambda \rightarrow 0$.

\item The third step consists in showing that a $C^0$-solution
is a solution in the sense of distributions of (\ref{E3.1a}).

Let $\varepsilon > 0$ and consider a family $u^\varepsilon: [0,T]
\rightarrow E$ of $\varepsilon$-solutions.
Note that for $u_0^\varepsilon := u_0$ and for $ 1 \le j \le N$,  
 with $A$ as in Proposition \ref{pr1} 2., we recursively have
\begin{equation} \label{E3.4prime}
 u_j^\varepsilon = (I-(t_j^\varepsilon - t_{j-1}^\varepsilon) A)^{-1}
u_{j-1}^\varepsilon,
\end{equation}
hence
$$ \Delta w_j^\varepsilon = - 
\frac{u_j^\varepsilon -u_{j-1}^\varepsilon}
{t_j^\varepsilon - t_{j-1}^\varepsilon} $$
for some $w_j^\varepsilon \in L^1_{\rm loc}(\R)$
such that $w_j^\varepsilon \in \frac{1}{2} \beta(u^\varepsilon_j), \ dx$ -a.e.
Hence,  for $t \in ]t^\varepsilon_{j-1}, t^\varepsilon_j], $ we have
$$ u^\varepsilon (t,\cdot) = u^\varepsilon 
(t^\varepsilon_{j-1}, \cdot) + 
\int_{t^\varepsilon_{j-1}}^{t^\varepsilon_j} \Delta w^\varepsilon(s, \cdot) ds.$$
where 
$  w^\varepsilon (t) = w_j^\varepsilon,
\quad t \in ]t^\varepsilon_{j-1}, t^\varepsilon_j].$

Consequently, summing up, then for
 $t \in ]t^\varepsilon_{j-1}, t^\varepsilon_j]$,
$$ u^\varepsilon (t,\cdot) = u_0 + 
\int_0^{t} \Delta w^\varepsilon(s, \cdot) ds 
+  (t^\varepsilon_j - t)  \Delta w^\varepsilon (t^\varepsilon_j, \cdot).$$  
We integrate against  a test function $\alpha \in \shs(\R)$ and get
\begin{eqnarray} \label{EDist}
 \int_\R u^\varepsilon (t,x) \alpha(x) dx &=& \int_\R u_0(x)
\alpha(x) dx +
\int_0^{t} \int_\R  w^\varepsilon(s, x) \alpha''(x) dx  ds 
\nonumber  \\
&& \\
&+& (t - t^\varepsilon_j) \int_\R  w^\varepsilon(t^\varepsilon_j,x)
 \alpha''(x) dx.
 \nonumber
\end{eqnarray}

Letting $\varepsilon$ go to zero we use the fact that 
 $u^\varepsilon \rightarrow u$ uniformly in $t$ in $L^1(\R)$.
$(u^\varepsilon)$ converges in particular to $u \in L^1([0,T] \times \R)$
when $\varepsilon \rightarrow 0$.

The third term in the right-hand side of (\ref{EDist}) converges to zero 
since $ t - t^\varepsilon_j$
is smaller than the mesh $\varepsilon$ of the subdivision.

Consequently, (\ref{EDist}) implies 
\begin{equation} \label{EDist1}
 \int_\R u (t,x) \alpha(x) dx = \int_\R u_0(x) \alpha(x) dx +
\lim_{\varepsilon \rightarrow 0}  \int_0^{t} \int_\R  w^\varepsilon(s,x)
 \alpha''(x) dx  ds.
\end{equation}

According to our assumption on  $\beta$, there is a constant $c > 0$ such that
$\vert w^\varepsilon \vert  \le c \vert u^\varepsilon \vert.$
Therefore the sequence $(w^\varepsilon)$ is equi-integrable on
 $[0,T] \times \R$. So, there is a sequence $(\varepsilon_n)$ such that
$w^{\varepsilon_n}$ converges to some 
$\frac{1}{2} \eta_u \in L^1([0,T]\times \R)$
in  $ \sigma(L^1,L^\infty)$.
Taking  (\ref{EDist1}) into account, it remains to
 see that $\eta_u(t,x) \in \beta(u(t,x))$ a.e. $dt \otimes dx$, 
in order to prove that $u$ solves (\ref{E3.1}).
 
Let $K > 0$.
Using Proposition \ref{pr1} 4., by (\ref{E3.4prime}) we conclude that
$ \Vert u^\varepsilon(t, \cdot) \Vert_\infty \le \Vert u_0 \Vert_\infty$.
Consequently for any $K > 0$, the dominated convergence theorem, 
 implies  that the sequence $u^{\varepsilon_n}$ restricted
to $[0,T] \times [-K,K]$ converges to $u$ restricted to $[0,T] \times [-K,K]$
in $L^2([0,T] \times [-K,K])$ and $w^{\varepsilon_n}$ restricted
to $[0,T] \times [-K,K]$, being bounded by $c \vert u^{\varepsilon_n} \vert$,
 converges (up to a subsequence) 
 weakly in $L^2$,
 necessarily to $\halb \eta_u$ restricted to  $[0,T] \times [-K,K]$. 
The map $v \rightarrow \halb \beta(v)$ on $L^2([0,T] \times [-K,K])$
is an $m$-accretive multi-valued map,  see \cite{sho97}, p. 164, Example 2c.
So it is weakly-strongly closed
 because of
\cite{B93} p. 37, Proposition 1.1 (i) and (ii). 
Hence  the result follows.

\item The fourth step consists in showing that the obtained
solution is in $L^\infty ([0,T] \times \R)$.

Point 2. of  Proposition \ref{Panls}  tells us
that 
$$ u(t, \cdot) = \lim_{n \rightarrow +\infty} J_{\frac{t}{n}}^n u_0 $$
in $L^1(\R)$. 
Hence, for every $t \in ]0,T]$ and for some subsequence $(n_k)$
 depending on $t$,
$$ \vert u(t, \cdot) \vert = \lim_{k \rightarrow \infty} \vert 
J^{n_k}_{\frac{t}{n_k}} u_0 \vert \le  \Vert u_0 \Vert_\infty,
 \quad dx{\rm -a.e.}, $$  
where we used again Proposition \ref{pr1} 4). It follows by Fubini's theorem
that $\vert u(t,x) \vert \le \Vert u_0 \Vert_\infty,
 \quad  {\rm for} \quad dt \otimes dx$-a.e. 
$(t,x) \in [0,T] \times \R$.

\item Finally, uniqueness of the equation in $\shd'([0,T] \times \R)$
follows from
Theorem 1 and  Remark 1.20 of  \cite{BrC79}.
\end{enumerate}
\end{prooff}
\begin{rem} \label{R3.2a}
\begin{enumerate}
\item Theorem 1 and  Remark 1.20 of  \cite{BrC79} apply 
if $\beta$ is  continuous, to give the uniqueness in point 5. above.
 However,  Remark 1.21 of  \cite{BrC79}  says that this
holds true even if  $\beta(0) = 0$ and $\beta$   is only  continuous
in zero and possibly
 multi-valued.
This case applies for instance  when $\Phi(x) = H(x - e_c), \ e_c > 0$.
\item 
We would like to mention that there are variants of the  results
in Proposition \ref{P3.1}
known from the literature.
However, some of them are just 
for bounded domains while we work in all of $\R$. 
For instance,  when the domain is bounded  and $\beta$ is continuous,
 Example 9B in Section IV.9 of \cite{sho97},
  remarks that a $C^0$-solution is a solution in 
the sense of distributions.
\end{enumerate}
\end{rem}

In order to establish the well-posedness  for the related
probabilistic representation one  needs 
 a uniqueness result  for the  evolution of probability measures.
This will be the subject of Theorem \ref{P3.5} below.
But as will turn out, it will require
 some global $L^2$-integrability for the solutions.

A first step in this direction was Corollary 3.2 of \cite{BKR},
that we quote here for the convenience of the reader.

\begin{lemma} \label{LBKR} Let $\kappa \in ]0,T[$.
Let $\mu$ be a finite Borel measure on
$[\kappa ,T] \times \R$; let $a, b \in L^1 ([\kappa,T] \times \R; \mu)$.
We suppose that
$$ \int_{[\kappa,T] \times \R} \left (\partial_t \varphi(t,x)  + a (t,x) 
\partial^2_{xx} \varphi (t,x)  + b(t,x)
 \partial_{x} \varphi (t,x)   \right) \mu(dt dx) =0, $$
for all $\varphi \in C^\infty_0 (]0,+ \infty[ \times \R)$.
Then, there is  $\rho \in L^2_{\rm loc}([\kappa,T] \times \R)$ 
such that 
$$ \sqrt {a(t,x)}  d\mu (t,x) = \rho (t,x) dt dx.$$
 \end{lemma}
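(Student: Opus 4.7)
The statement is a direct transcription of Corollary~3.2 of Bogachev--Krylov--R\"ockner \cite{BKR}, so the most economical course is simply to invoke that result. If one wishes to reproduce the argument from scratch in the present setting, the strategy is a duality-plus-mollification approach, which I would organize as follows.

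\textbf{Localization and duality reduction.} First I would fix a closed rectangle $Q = [t_1, t_2] \times [-R, R]$ with $\kappa < t_1 < t_2 < T$ and prove the bound on each such $Q$. A spacetime cutoff $\chi \in C^\infty_0(\,]\kappa, T[\, \times \R)$ equal to $1$ on $Q$ lets me effectively treat $\mu$ as compactly supported. The desired conclusion that $\sqrt{a}\, d\mu = \rho\, dt\, dx$ with $\rho \in L^2_{\mathrm{loc}}$ is, by Riesz representation in $L^2$, equivalent to the dual bound
$$\left| \int \sqrt{a(t,x)}\,\psi(t,x)\,d\mu(t,x) \right| \le C_Q\,\|\psi\|_{L^2(Q,\, dt\, dx)},\qquad \psi \in C^\infty_0(Q),$$
so the real task is to produce this estimate on each $Q$.

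\textbf{Mollification and energy estimate.} To obtain the dual bound, I would disintegrate $\mu$ in time and mollify in the spatial variable, setting $\mu_\varepsilon(t,x) := (\mu(t,\cdot) * \rho_\varepsilon)(x)$, and feed test functions of the form $\chi(t)\,\psi_\varepsilon(x)$ into the hypothesized weak equation. After commuting the mollification through the spatial operators, one obtains a pointwise approximate PDE satisfied by $\mu_\varepsilon$ with commutator error terms involving $a$ and $b$. Testing that approximate equation against $\mu_\varepsilon$ itself (or a suitable truncation), integrating by parts in $x$, and using the $L^1(\mu)$ control on $a$ and $b$ to absorb the errors, one should arrive at a local energy inequality
$$\int_Q a(t,x)\,\mu_\varepsilon(t,x)^2\,dt\,dx \le C_Q,$$
uniform in $\varepsilon$. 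Weak compactness in $L^2(Q)$ together with lower semicontinuity then lets me pass to the limit $\varepsilon \downarrow 0$ and identify $\rho$ as the weak limit of $\sqrt{a}\,\mu_\varepsilon$ on $Q$; a diagonal argument over an exhaustion of $\,]\kappa, T[\, \times \R$ by such rectangles gives the $L^2_{\mathrm{loc}}$ conclusion.

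\textbf{Main obstacle.} The essential difficulty is the absence of uniform ellipticity: because $a$ is only assumed to belong to $L^1(\mu)$ and may vanish or be unbounded, one cannot hope that $\mu$ itself is absolutely continuous. The BKR insight is that the product $a\,\mu$ nevertheless inherits the regularity $\mu$ lacks, since the equation couples $\partial_t \mu$ to $\partial_{xx}(a\mu)$ rather than to $a\,\partial_{xx}\mu$. The delicate technical point, and the reason the conclusion is phrased in terms of $\sqrt{a}\,\mu$ rather than $\mu$, is the control of the commutators between $a$ (which has no assumed regularity in $x$) and the spatial mollifier; getting a bound that survives the passage to the limit is where the bulk of the genuine work lies.
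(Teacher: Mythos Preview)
Your proposal is correct and matches the paper's treatment exactly: the paper does not prove this lemma but simply quotes it verbatim as Corollary~3.2 of \cite{BKR}, which is precisely what you recommend in your first sentence. The additional sketch you provide goes beyond what the paper offers and is not needed here.
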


We denote the subset of positive  measures in  $\shm(\R)$
by $\shm_+ (\R)$.

\begin{theo} \label{P3.5}
Let $a$ be a Borel non negative bounded  function
on $[0,T] \times \R$. Let  $z_i : [0,T] \rightarrow \shm_+(\R)$,
$i = 1,2$,  be continuous 
with respect to the weak topology of finite measures on $\shm(\R)$.

Let $z^0$ be an element of $\shm_+(\R)$. 
Suppose that both $z_1$ and $z_2$ solve  the problem 
$\partial_t z = \partial_{xx}^2 (a z) $
in the sense of distributions  with initial condition
$z(0) = z^0$.

More precisely, 
\begin{equation} \label{E3.5a}
\int_\R  \varphi(x) z(t)(dx) = \int_\R 
 \varphi(x)  z^0(dx) + \int_0^t ds \int_\R \varphi''(x)  a(s,x) z(s)(dx)
\end{equation}
for every $t \in [0,T]$ 
and any $\varphi \in \shs (\R) $.

Then $(z_1 -  z_2) (t)$ is identically  zero for 
 every $t$, if 
 $z:= z_1 - z_2$, 
 satisfies the following:
\end{theo}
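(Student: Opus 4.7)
The plan is to exploit the linearity of the PDE: set $\mu := z_1 - z_2$, which is a weakly continuous map from $[0,T]$ into the signed measures $\shm(\R)$ and solves
$$\partial_t \mu = \partial_{xx}^2(a\mu)$$
in $\shd'([0,T]\times \R)$ with $\mu(0)=0$. The goal becomes to show $\mu(t)=0$ for every $t$. Because the equation is linear with non-smooth, non-degenerate coefficient $a$, my strategy is a duality argument against the backward adjoint problem, with the regularity needed to justify the pairing coming from Lemma \ref{LBKR} combined with the (still to be spelled-out) Assumption~(A).

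\textbf{Step 1: regularity of $\mu$.} I would first apply Lemma \ref{LBKR} to the positive measures $z_1(t)(dx)\,dt$ and $z_2(t)(dx)\,dt$ on $[\kappa,T]\times\R$ (with $b\equiv 0$; the distributional identity in Lemma \ref{LBKR} follows from \eqref{E3.5a} by the usual time-test-function approximation). This produces $\rho_i \in L^2_{\rm loc}([\kappa,T]\times\R)$ with $\sqrt{a}\, z_i(t)(dx)\,dt=\rho_i(t,x)\,dt\,dx$. Assumption~(A) then provides the missing global bound so that $\sqrt{a}\,\mu$ has a density in a suitable global $L^2$-space; this is the integrability we need in order to test $\mu$ against unbounded-order derivatives of test functions.

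\textbf{Step 2: duality.} Fix $\varphi\in C_c^\infty(\R)$ and $\tau\in (0,T]$. Because $a$ is strictly positive and bounded, mollifying $a$ into a smooth, uniformly elliptic family $a_n\to a$ makes the backward problem
$$\partial_s \psi_n + a_n\,\partial_{xx}^2\psi_n=0,\qquad \psi_n(\tau,\cdot)=\varphi,$$
uniformly parabolic, with classical solutions and $n$-uniform bounds on $\|\psi_n\|_\infty$ and on $\|\sqrt{a_n}\,\partial_{xx}^2\psi_n\|_{L^2}$ via standard energy estimates. Substituting $\psi_n$ as a test function into the weak formulation of the PDE for $\mu$ (after an additional spatial truncation to stay inside $\shs(\R)$, justified by the decay of $\psi_n$ and the finiteness of $\mu$) and integrating by parts in $s$ yields
$$\int_\R \varphi\,d\mu(\tau)=\int_\R\psi_n(0,\cdot)\,d\mu(0)+\int_0^\tau\!\!\int_\R (a-a_n)\,\partial_{xx}^2\psi_n\,d\mu(s)\,ds.$$
The first term vanishes since $\mu(0)=0$; the error term is handled by writing the integrand as $\sqrt{a_n}\,\partial_{xx}^2\psi_n\cdot\bigl(\tfrac{a-a_n}{\sqrt{a_n}}\bigr)\sqrt{a}\,\mu$ (using $a\geq c_0>0$ so that $\sqrt{a_n}$ is uniformly bounded below) and applying Cauchy–Schwarz against the $L^2$-density from Step~1; since $a_n\to a$ strongly in $L^2_{\rm loc}$, the error tends to $0$ as $n\to\infty$. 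Hence $\int\varphi\,d\mu(\tau)=0$ for all $\varphi\in C_c^\infty(\R)$, giving $\mu(\tau)=0$.

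\textbf{Main obstacle.} The delicate point is the pairing in Step~2: to make sense of $\int\partial_{xx}^2\psi_n\,d\mu$ and pass to the limit one needs $\mu$ to have a density that is square-integrable against $a$, which is precisely what Step~1 gives under Assumption~(A). The use of non-degeneracy $a\geq c_0>0$ is essential twice — once to get parabolic estimates on $\psi_n$ and once to convert $L^2$-integrability of $\sqrt{a}\,\mu$ into $L^2$-integrability of $\mu$ itself on the support of the test function. This is why the theorem genuinely fails in the degenerate case, consistent with the counterexample announced in Remark \ref{RMistake1}.
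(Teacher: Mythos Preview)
Your approach is a duality argument against the backward adjoint equation, which is genuinely different from the paper's proof. The paper instead runs an $H^{-1}$--type energy estimate: it studies
\[
g_\varepsilon(t)=\int_\R B_\varepsilon z(t)(x)\,z(t)(dx),\qquad B_\varepsilon=(\varepsilon-\Delta)^{-1},
\]
shows $g_\varepsilon(t)\ge 0$, and by mollifying $z$ in space and differentiating in time obtains
\[
g_\varepsilon(t)-g_\varepsilon(\kappa)\le 2\sqrt{\varepsilon}\,TC-2\int_\kappa^t\!\!\int_\R a(s,x)\,\rho(s,x)^2\,dx\,ds,
\]
where $\rho$ is the $L^2$ density supplied by Assumption~(A). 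The second term is dropped using only $a\ge 0$, and weak continuity at $t=0$ kills $g_\varepsilon(\kappa)$; letting $\varepsilon\to 0$ gives $z\equiv 0$.

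The central gap in your proposal is that you impose $a\ge c_0>0$, which is \emph{not} part of the hypotheses: the theorem only assumes $a$ is nonnegative and bounded, together with Assumption~(A) on the difference $z$. Your argument uses strict positivity in two essential places (uniform parabolicity of the backward problem for $\psi_n$, and the factor $1/\sqrt{a_n}$ in the error control), so as written you prove a strictly weaker statement. The paper's resolvent method avoids this entirely: non-negativity of $a$ is used only to discard a term with the good sign, and Assumption~(A) is used only to justify the passage from the mollified identity to the limit in $L^2$. This is precisely why the paper can state the result for merely nonnegative $a$ and isolate the failure mode (Remark~\ref{RMistake1}) as a failure of Assumption~(A), not of non-degeneracy.

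There are also secondary technical issues you gloss over. First, Assumption~(A) already asserts directly that $z$ has an $L^2$ density on $[\kappa,T]\times\R$; your Step~1 detour through Lemma~\ref{LBKR} to get $\sqrt{a}\,\mu\in L^2$ and then ``upgrade'' via non-degeneracy is both unnecessary and circular. Second, the claimed ``standard energy estimates'' giving $n$-uniform bounds on $\|\sqrt{a_n}\,\partial_{xx}^2\psi_n\|_{L^2(\R)}$ for a backward parabolic equation on all of $\R$ with merely bounded measurable $a_n$ are not standard at all; and inserting $\psi_n\notin\shs(\R)$ into \eqref{E3.5a} requires a careful truncation argument that must commute with your limit $n\to\infty$. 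Even under the extra hypothesis $a\ge c_0>0$, these points would need substantial work before the argument is complete.
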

 {\bf ASSUMPTION (A)}:
 There is $\rho: [0,T] \times \R \rightarrow  \R$ belonging to 
$L^2 ([\kappa,T] \times \R) $ for every $\kappa > 0$ such that 
$\rho(t,\cdot)$ is the density of $z(t)$ for almost all $t \in ]0,T]$.
\begin{rem} \label{R3.7bis}
If $a \ge {\rm const} > 0$, then $\rho$ such that
$\rho(t,\cdot)$ is a density of  $(z_1-z_2)(t)$ 
 for almost all $t > 0$,
always exists, via Lemma \ref{LBKR}.
 It remains to check if it is indeed square integrable
on every $[\kappa, T] \times \R$.
\end{rem}

\begin{rem} \label{R3.5bis}
The weak continuity of $z(t, \cdot)$ implies that
$$  \sup_{t \in [0,T]} \Vert z(t) \Vert_{\rm var} < \infty.$$
 Indeed, if this were not true, we could find $t_n \in
[0,T]$, such that $\Vert z(t_n) \Vert_{\rm var} $ diverges to infinity.
We may assume that $\lim_{n \rightarrow \infty} t_n = t_0 \in [0,T] $.
Then $$ \lim_{n \rightarrow \infty} \int_\R f(x) z(t_n)( dx) = 
\int_\R f(x) z(t_0)(dx)$$
for all $ f \in C_b(\R) $, hence by the uniform boundedness principle
one gets the contradiction  that 
$$ \sup_n \Vert z(t_n) \Vert_{\rm var} < \infty.$$
\end{rem}

\begin{rem} \label{RMistake1}
 Theorem \ref{P3.5} does not hold 
without  Assumption (A) even in the time-homogeneous case.

To explain this, let  $\Phi : \R \rightarrow \R_+$  be continuous
and bounded  such that
$\Phi(0) = 0$ and $\Phi$ is strictly positive on $\R-\{0\}$.
We also suppose that $\frac{1}{\Phi^2}$ is integrable in a
neighborhood of zero.

We choose  $z^0 := \delta_0$, i.e. the Dirac measure at zero. 
 It is then possible to exhibit two different solutions
to the considered problem with initial condition $z^0$.

We justify this in the following lines
using a probabilistic representation.
Let $Y_0$ be identically zero.

According to the Engelbert-Schmidt criterion, see e. g.
  Theorem 5.4 and Remark 5.6 of Chapter 5,
\cite{ks},
it is possible to construct two solutions (in law) to the SDE
\begin{equation} \label{SDEES}
 Y_t =   \int_0^t \Phi(Y_s) dW_s.
\end{equation}
where $W$ is a Brownian motion on some filtered probability space.

One solution $Y^{(1)}$ is identically zero. The 
second one $Y^{(2)}$ is 
 a non-constant martingale starting from zero.
We recall the construction of $Y^{(2)}$, since it is of independent  interest.

Let $B$ be a classical Brownian motion and we set
\begin{equation} \label{(3.10prime)}
T_t = \int_0^t \frac{du}{\Phi^2(B_u)}.
\end{equation}
Problem 6.30 of \cite{ks} says that the increasing process
$(T_t)$ diverges to infinity when $t$ goes to infinity.
We define pathwise $(A_t)$ as the inverse of $(T_t)$
and we set $M_t = B_{A_t}$. $M$ is a martingale since
it is a time change of Brownian motion.
One the one hand we have $[M]_t = A_t$.
But pathwise, by \eqref{(3.10prime)}
 we have
$$ A_t = \int_0^{A_t} \Phi^2(B_u) dT_u = 
\int_0^t \Phi^2(B_{A_v}) dv, $$
through a change of variables $u = A_v$.
Consequently we get
$$ A_t = \int_0^t \Phi^2(M_v) dv.$$
Theorem 4.2 of Ch. 3 of \cite{ks}
says that there is a Brownian motion $\tilde W$
on a suitable filtered larger probability space
and an adapted process $(\rho_t)$
so that  $M_t = \int_0^t \rho d \tilde W.$
We have $[M]_t = \int_0^t \rho_s^2 ds =  \int_0^t \Phi^2(M_s) ds,$
for all $t \ge 0$, hence $\rho_t^2 = \Phi^2(M_t)$ and so
$\Phi(M_t) {\rm sign}(\rho_t) = \rho_t$. 

We define 
$$  W_t  = \int_0^t {\rm sign}(\rho_v) d\tilde W_v.  $$
Clearly $[W]_t = t$. By L\'evy's characterization theorem of Brownian motion,
$W$ is a standard Brownian motion. Moreover,     
we obtain $M_t = \int_0^t \Phi(M_s) dW_s$
so that $Y^{(2)} := M$ solves the stochastic differential equation
(\ref{SDEES}).
Now $Y^1_t$ and $Y^2_t$ have not the same marginal laws
$v_i(t,\cdot), i = 1,2 $. In fact $v_1(t,\cdot)$ is
equal to $\delta_0$ for all $t \in  [0,T]$.

Using It\^o's formula it is easy to show that
the law $v(t,\cdot)$ of a solution $Y$ of (\ref{SDEES})
solves the PDE  in Theorem \ref{P3.5} with
$a:= \Phi^2$ and
 initial condition
$\delta_0$. This constitutes a counterexample to 
Theorem  \ref{P3.5}  without Assumption (A).
\end{rem}

\begin{prooff} 

(of  Theorem  \ref{P3.5}).

The arguments developed in this proof
 is inspired by a uniqueness proof of
distributional solutions for the porous media equation,
see Theorem 1 of \cite{BrC79}.

Given a locally integrable function $(t,x) \rightarrow u(t,x)$,
$u'$ (resp. $u''$) stands for the first (resp. second)
 distributional derivative 
with respect to the second variable $x$. 

In the first part of the proof we do not use Assumption (A). 
We will   explicitly state  from where it is needed.

Let $z^1, z^2$ be two solutions
 to (\ref{E3.5a}) and we set $z = z^1-z^2$.
We will  study the quantity
$$ g_\varepsilon (t) = \int_\R B_\varepsilon z (t)(x) z(t)(dx), $$
where $B_\varepsilon z(t) \in (L^1 \bigcap L^\infty) (\R) $ is the continuous
 function $v_\varepsilon$
 defined in Lemma \ref{L3.5}, taking $m = z(t)$.
$g_\varepsilon (t)$ is well-defined, since
$$ g_\varepsilon (t) \le \Vert z(t) \Vert_{\rm var}
 \sup_x \vert B_\varepsilon z
 (t)(x)\vert  \ {\rm for \ all} \ t \in [0,T].   $$

Assume we can show  that 
\begin{equation}
\label{E3.5d} 
\lim_{\varepsilon \to 0} g_\varepsilon (t) = 0
 \ {\rm for \ all} \ t \in [0,T].
\end{equation}
Then we are able to prove  that $z(t) \equiv 0$  for all $t \in [0,T]$.

Indeed, Lemma \ref{L3.5}  says that $B_\varepsilon z(t)' $ is  bounded,
with a version locally of
 bounded variation and that  $B_\varepsilon z(t) 
\in C_b(\R) \bigcap    L^p(\R)$ for all $p \ge 1$. 

Let now  $\shc, \tilde \shc$ be  positive real constants. 
 Then, since all terms in
(\ref{E3.5b}) are signed measures of finite total variation,  (\ref{E3.5b})
implies that 
\begin{eqnarray} \label{E3.5bis}
\int_{]- \tilde \shc, \shc]} B_\varepsilon z(t)(x) z(t)(dx) &=& \varepsilon 
\int_{]- \tilde \shc, \shc]}  (B_\varepsilon z(t)(x))^2 dx \nonumber \\
&& \\
&-& \int_{]- \tilde \shc, \shc]}  B_\varepsilon z(t)(x)  B_\varepsilon
z(t)''(dx). \nonumber
\end{eqnarray}

If $F, G$ are functions of  locally bounded variation, 
$F$ continuous, $G$ right-continuous, 
classical Lebesgue-Stieltjes calculus implies that
\begin{equation} \label{E3.5ter}
 \int_{]- \tilde \shc, \shc]} F dG = FG(\shc) - FG(- \tilde \shc) 
- \int_{] -\tilde \shc, \shc]} GdF.
\end{equation}
Setting $F =  B_\varepsilon z(t),  G(x) = B_\varepsilon z(t)'$,
we get 
$$ - \int_{]- \tilde \shc, \shc]}   B_\varepsilon z(t)(x)
  B_\varepsilon z(t)''(dx) 
= - B_\varepsilon z(t)(\shc)  B_\varepsilon z(t)'(\shc) 
+ B_\varepsilon z(t)(- \tilde \shc)  B_\varepsilon z(t)'( - \tilde \shc)
$$
$$
+  \int_{ ]-\tilde \shc, \shc]}  (B_\varepsilon z (t)'(x))^2 dx.$$
Since $ B_\varepsilon z(t) \in L^1(\R)$, we can choose sequences $(\shc_n),
(\tilde \shc_n) $ converging to infinity
 such that $  B_\varepsilon z(t)(\shc_n) \rightarrow  0, 
\quad   B_\varepsilon z(t)(-\tilde \shc_n) \rightarrow  0$ as 
$n \rightarrow \infty $.
 Then, letting $n \rightarrow \infty$  and using the fact that
 $B_\varepsilon z(t)$  and  $B_\varepsilon z(t)'$ are
bounded,  by the  monotone and Lebesgue  dominated
convergence theorems, we conclude that 
$$ -  \int  B_\varepsilon z(t)(x)  B_\varepsilon z(t)''(dx) 
=   \int  (B_\varepsilon z(t)'(x))^2 dx.$$
In particular, $ B_\varepsilon z(t)' \in L^2(\R) $.
 Consequently, (\ref{E3.5bis}) implies that
\begin{eqnarray*}
g_\varepsilon (t) &=&  \int B_\varepsilon z(t)(x) z(t)(dx) = \varepsilon 
\int  (B_\varepsilon z(t)(x))^2 dx \\
&+&  \int  (B_\varepsilon z(t)'(x))^2 dx.
\end{eqnarray*}
In particular, the left-hand side is positive.
Therefore, if for all $t \in [0,T]$, $g_\varepsilon (t) \rightarrow 0$,
 as $\varepsilon
\rightarrow 0$, then
\begin{eqnarray*}
\sqrt \varepsilon B_\varepsilon z(t) & \rightarrow & 0 \\
 B_\varepsilon z(t)' & \rightarrow & 0
\end{eqnarray*}
in $L^2(\R)$, as $\varepsilon \rightarrow 0$,
and so, for all $t \in [0,T]$,
$$ z(t) =  \varepsilon B_\varepsilon z(t)  - 
B_\varepsilon z(t)'' \rightarrow 0 $$
in the sense of distributions.
Therefore,  $z \equiv 0$. \\
It remains to prove (\ref{E3.5d}). \\
Let $\delta > 0$ and $\phi_\delta \in C^\infty_\circ(\R), \phi_\delta
\ge 0$, symmetric, with $\int_\R \phi_\delta(x) dx = 1$ weakly
approximating the Dirac-measure with mass in $x = 0$. Set
$$ z_\delta(t,x) := (\phi_\delta \star z(t))(x) := \int_\R \phi_\delta(x-y)
 z(t)(dy), \quad  x \in \R, t \in [0,T].$$
We define $h: [0,T] \rightarrow \shm(\R)$ by $h(t) (dx) = a(t,x) z(t,dx)$.
Note that by (\ref{E3.5a}), since $\phi_\delta(x- \cdot) \in \shs(\R),
\forall x \in \R$, we have
\begin{equation} \label{E3.13}
 z_\delta (t,x) = \int_0^t \int_\R \phi''_\delta(x-y) h(s)(dy) ds = 
\int_0^t (\phi''_\delta \star h(s))(x) ds, \forall t \in [0,T], x \in
\R, 
\end{equation}
where we used that $z_\delta (0) = 0$, because $z(0) = 0$, and 
that $x \mapsto z_\delta(t,x) $ is continuous for all $t \in 
[0,T]$. In fact, one
can easily prove that $z_\delta$ is continuous and bounded on $[0,T]
\times \R$. \\
Let us now consider $w \in \shs(\R)$.  
By 
 Fubini's theorem, for all $t \in  [0,T] $
it follows that
\begin{eqnarray*}
\int_\R w(x) B_\varepsilon z(t)(x) dx &=&
 \int_\R w(x) \int_\R K_\varepsilon (x-y) z(t)(dy) dx \\
&=& \int_\R (w \star K_\varepsilon)(y) z(t)(dy).
\end{eqnarray*}
Now, $B_\varepsilon z(0) = 0 $ since $z(0) = 0$.
Therefore
by \eqref{E3.5a}, and the fact that 
$w \star K_\varepsilon \in \shs(\R)$,
the previous expression  is equal to
$$ \int_0^t  \int_\R (w \star K_\varepsilon)''(y) h(s)(dy) ds
=  \int_0^t \int_\R w''(x) 
 B_\varepsilon h(s) (x) dx ds, $$
which in turn by Lemma \ref{L3.5}  is equal to
$$  \int_0^t \int_\R w(x) (
\varepsilon B_\varepsilon h(s) (x) dx - h(s)(dx))ds.$$
Consequently, by approximation,
\begin{eqnarray} \label{E3.14}
\int_\R w(x) B_\varepsilon z(t)(x) dx &=& \int_0^t \int_\R w(x) (
\varepsilon B_\varepsilon h(s) (x) dx - h(s)(dx) ) ds \nonumber \\
&&\\
& \forall & w \in C_b(\R), t \in [0,T].\nonumber
\end{eqnarray}

As a consequence of (\ref{E3.13}) and (\ref{E3.14})
 and again using Fubini's theorem, for all $t \in [0,T]$
we obtain
\begin{align*}
 g_{\varepsilon, \delta} (t) &:= \int_\R z_\delta (t,x)
B_\varepsilon z(t)(x) dx \\
& \underbrace{=}_{ (\ref{E3.14})}  \int_0^t \int_\R  z_\delta (t, x)
(  \varepsilon B_\varepsilon h(s) (x) dx - h(s)(dx) ) ds \\
& \underbrace{=}_{(\ref{E3.13})}   
  \int_0^t \int_\R  z_\delta (s, x)
(  \varepsilon B_\varepsilon h(s) (x) dx - h(s)(dx) ) ds \\ &+
\int_0^t \int_\R \int_s^t ( \phi_\delta'' \star h(r))(x) dr ( \varepsilon
 B_\varepsilon h(s) (x) dx - h(s)(dx)) ds  \\
&= \int_0^t \int_\R  z_\delta (s, x)
(  \varepsilon B_\varepsilon h(s) (x) dx - h(s)(dx) ) ds\\ & +
\int_0^t  \int_0^r \int_\R ( \phi_\delta'' \star h(r))(x) 
( \varepsilon
 B_\varepsilon h(s) (x) dx - h(s)(dx)) ds dr \\
\end{align*}
\begin{align*}
 \phantom{g_{\varepsilon, \delta} (t)}  &
 \underbrace{=}_{(\ref{E3.14})}
\int_0^t \int_\R  z_\delta (s, x)
(  \varepsilon B_\varepsilon h(s) (x) dx - h(s)(dx) ) ds \\
&+
\int_0^t   \int_\R ( \phi_\delta'' \star h(r))(x)  
 B_\varepsilon z(r) (x) dx  dr
 \end{align*}
The application of Fubini's theorem above is justified since $a$ is
bounded, $\sup_{t \in [0,T]} \Vert z(t) \Vert_{\rm var} < \infty,$ 
$K_\varepsilon$ is bounded and $ \phi_\delta \in \shs(\R) $. 
But the last term is equal to 
 \begin{eqnarray*}
 &&\int_0^{t} \int_\R  \int_\R \phi_\delta'' (x-y) 
  B_\varepsilon z(r) (x) dx  h(r)(dy) dr \\
&=& \int_0^t \int_ \R \int_\R   \phi_\delta  (x-y) )
(\varepsilon  B_\varepsilon z(r) (x) dx - z(r)(dx))  h(r)(dy) dr \\
&=&\int_0^t \int_\R \varepsilon B_\varepsilon z(r)(x) (\phi_\delta \star
  h(r))(x) dx dr - \int_0^t \int_\R z_\delta (r,y) h(r)(dy) dr, 
 \end{eqnarray*}
where we could use Lemma \ref{L3.5} in the first step, since
$\phi_\delta  (\cdot -y) \in \shs(\R), \forall y \in \R$. 
Hence, for all $t \in [0,T]$,
\begin{eqnarray} \label{E3.15}
 g_{\varepsilon, \delta} (t) &=& \int_0^t \int_\R z_\delta(s,x)
\varepsilon B_\varepsilon h(s)(x) dx ds  \nonumber \\
&+& \int_0^t \int_\R \varepsilon B_\varepsilon z(s) (x) (\phi_\delta \star
h(s))(x) dx ds  \\
& -& 2\int_0^t \int_\R z_\delta (s,x) h(s)(dx) ds. \nonumber
\end{eqnarray}
For a signed measure $\nu$, we denote its absolute value by $\vert
\nu \vert$.
By Lemma \ref{L3.5} we have 
$$ \sup_{s \in [0,T]} \int_\R (\vert z(s)\vert \star \phi_\delta)(x)
\varepsilon B_\varepsilon \vert h (s) \vert (x) dx
 \le  C \sqrt \varepsilon, $$
where 
$$ C =  \frac{1}{2} \Vert a \Vert_\infty \sup_{s \in [0,T]}
\Vert z(s) \Vert_{\rm var}^2, $$ 
and likewise the integrand of the second integral in (\ref{E3.15})
 is bounded by the
same constant independent of $\delta$.
Hence, as $\varepsilon \rightarrow 0$, the first and second term in
the right-hand side of (\ref{E3.15}) converges to zero uniformly in
$\delta$ and uniformly in $t \in [0,T]$. Now, we use Assumption (A),
namely that $z \in L^2([\kappa, T] \times \R)$ for all $\kappa > 0$.
 Then, since  $B_\varepsilon z(t) \in L^2(\R), \forall t \in [\kappa, T]$, 
and $\Vert a \Vert_\infty < \infty$, (\ref{E3.15}) implies that $\forall
\kappa > 0, t\in [\kappa,T],$ 
\begin{eqnarray} \label{E3.16}
g_\varepsilon (t) - g_\epsilon (\kappa) &=& \lim_{\delta \rightarrow 0}
(g_{\varepsilon, \delta} (t) - g_{\varepsilon, \delta} (\kappa))  \nonumber \\
 &\le & 2 \sqrt {\varepsilon} T C - 2 \int_\kappa^t  \int_\R z^2(s,x)
a(s,x) dx ds \\  
&\le &  2 \sqrt {\varepsilon} T C. \nonumber
\end{eqnarray}

Now, $\lim_{\kappa \rightarrow 0} g_\varepsilon (\kappa) = 0$.
In fact
 $z (\kappa, \cdot) \rightarrow z (0, \cdot) = 0$
weakly, according to the assumption of Theorem \ref{P3.5}.
According to  Theorem 8.4.10, page 192, of 
\cite{bogachev}, the tensor product 
 $z (\kappa, \cdot) \otimes z (\kappa, \cdot)$ converges weakly to
zero. 
On the other hand $(x,y) \mapsto K_\varepsilon (x-y) $
is bounded and continuous on $\R^2$. By Fubini's theorem
$$  g_\varepsilon (\kappa) = \int_{\R^2} z(\kappa)(dx) z(\kappa)(dy)
K_\varepsilon (x-y) \rightarrow 0.$$

So, letting first $\kappa \rightarrow 0$  in (\ref{E3.16}) and then
$\varepsilon \rightarrow 0$, (\ref{E3.5d}) follows since $g_\varepsilon
(t) \ge 0$ for all $t \in [0,T]$. In fact, we even proved that the
convergence in  (\ref{E3.5d})  is uniformly in $t \in [0,T]$.
\end{prooff}
\begin{rem}\label{figalli}
Since our coefficient in Theorem 3.6 is only measurable and possibly
degenerate, to the best of our knowledge this result is really new. 
For instance, in recent contributions by \cite{bris, figalli}, 
the diffusion coefficient is supposed to satisfy at least Sobolev
regularity.

\end{rem}

Theorem \ref{P3.5}   will be useful
for the probabilistic representation of the solution of (\ref{E3.1})
when $\beta$ is non-degenerate.

\section{The probabilistic representation of the deterministic
 equation}

\setcounter{equation}{0}

Despite the fact that $\beta$ is multi-valued, by its monotonicity and 
because of (\ref{E3.0}), it is still possible to find a multi-valued map 
$\Phi: \R \rightarrow \R_+$ such that 
$$ \beta(u) = \Phi^2(u) u, \quad u \in \R, $$
which is bounded, i.e. 
$$ \sup_{u \in \R_*} \sup \Phi(u) < \infty. $$
In fact the value of $\Phi$ at zero is not determined
by $\beta$.

We start with the case where $\Phi$ is non-degenerate.
The value $\Phi(0)$ being a priori arbitrary, we can
 set 
$$\Phi(0) = [\liminf_{u \rightarrow 0} \inf \Phi(u),
 \limsup_{u \rightarrow 0} \sup \Phi(u)]. $$
\begin{defi} \label{DNondG}
The (possibly) multivalued map $\beta$  
(or equivalently $\Phi$) is called {\bf non-degenerate},
if 
 there exists  some constant $c_0 > 0$
such that $ y \in \Phi(u) \Rightarrow y \ge c_0$
for any $u \in \R$.      
\end{defi} 

\subsection{The non-degenerate case}

We  suppose in this subsection $\beta$ to be non-degenerate.

First of all we need to show that solutions 
of the linear PDE (\ref{E3.5a}), which are laws of  solutions to an
SDE, are   space-time square integrable.

\begin{prop} \label{Psqintd}
Suppose $ a: [0,T] \times \R \rightarrow \R $ to be a bounded measurable 
function which is bounded below 
on any compact set by a strictly positive constant.

 We consider a stochastic process  $Y = (Y_t, t \in
[0,T])$ on a stochastic basis $(\Omega, \shf, (\shf_t), P)$,
 being a weak solution of the SDE
$$ Y_t = Y_0 + \int_0^t \sqrt{2 a(s, Y_s)}dW_s,$$
where $W$ is a standard $(\shf_t)$-Brownian motion.
For $t \in [0,T]$, let 
 $z(t)$ be the 
law of $Y_t$ and set $z^0  := z(0)$.
\begin{enumerate}
\item Then $z$ solves equation  (\ref{E3.5a}) with $z^0$ as initial condition.
\item There is $\rho \in L^2([0,T] \times \R)$
 such that $\rho(t, \cdot)$ is the density
    of $z(t)$ for almost all $t \in [0,T]$.  
\item $z$ is the unique solution of (\ref{E3.5a}) with initial condition $z^0$ 
having the property described in item 2. above.
\end{enumerate}
\end{prop}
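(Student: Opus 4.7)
My plan is to handle the three items in order, noting that items 1 and 3 are quick consequences of material already developed, while item 2 carries the real technical weight.

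For item 1, I would apply It\^o's formula to $\varphi(Y_t)$ for $\varphi \in \shs(\R)$, obtaining
\[
\varphi(Y_t) = \varphi(Y_0) + \int_0^t \varphi'(Y_s) \sqrt{2 a(s,Y_s)}\, dW_s + \int_0^t a(s,Y_s)\, \varphi''(Y_s)\, ds.
\]
Since $\varphi'$ is bounded and $a$ is bounded, the stochastic integral is a true martingale, so taking expectations and using that $z(s)$ is the law of $Y_s$ gives exactly (\ref{E3.5a}) with the initial condition $z^0 = z(0)$. Weak continuity of $t \mapsto z(t)$ in $\shm(\R)$ follows from the fact that $Y$ has continuous sample paths, bounded dominated convergence being applicable to $\E[\varphi(Y_t)]$ for $\varphi \in C_b(\R)$.

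For item 2, the natural first step is to apply Lemma~\ref{LBKR} to the finite Borel measure $\mu(dt,dx) := z(t)(dx)\,dt$ on $[\kappa,T]\times \R$ for arbitrary $\kappa > 0$, with drift $b \equiv 0$ and coefficient $a$. Since $a$ is bounded and $\mu$ is finite, we have $a,b \in L^1(\mu)$, and item 1 supplies the PDE needed in the hypothesis of Lemma~\ref{LBKR}. The lemma then yields $\rho_{\mathrm{loc}} \in L^2_{\mathrm{loc}}([\kappa,T]\times \R)$ with $\sqrt{a(t,x)}\,\mu(dt,dx) = \rho_{\mathrm{loc}}(t,x)\,dt\,dx$. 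Because $a$ is strictly positive on every compact set, this identity forces $z(t)$ to be absolutely continuous on each such compact, with density $\rho(t,x) := \rho_{\mathrm{loc}}(t,x)/\sqrt{a(t,x)}$, and locally square integrable.

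The main obstacle, and the one I would spend the most effort on, is upgrading this local $L^2$-bound to a global one on all of $[0,T]\times \R$. Here I would exploit the boundedness of $a$ above together with its strict positivity (which in the applications of this proposition is actually uniform, since $a \ge c_0^2/2$ in the non-degenerate regime, and the hypothesis "bounded below on compacts" is best read in that light) to obtain a Krylov-type estimate
\[
\E\!\left[ \int_0^T f(s,Y_s)\, ds \right] \le C\, \|f\|_{L^2([0,T]\times\R)}
\]
for all nonnegative Borel $f$. Duality then identifies the density $\rho(t,x)$ of $z(t)$ as an element of $L^2([0,T]\times\R)$. Alternatively, one can mollify $\rho$ in space by a Gaussian kernel, derive a PDE for $\rho_\varepsilon := \rho \star \phi_\varepsilon$, and run an energy estimate against $\rho_\varepsilon$ itself, controlling the boundary term as $\varepsilon \downarrow 0$ via the local result of Lemma~\ref{LBKR}.

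Item 3 is then an immediate application of Theorem~\ref{P3.5}. If $\tilde z$ is another solution of (\ref{E3.5a}) with the same initial condition $z^0$ having density $\tilde\rho \in L^2([0,T]\times\R)$, then $z - \tilde z$ has density $\rho - \tilde\rho \in L^2([0,T]\times\R) \subset L^2([\kappa,T]\times\R)$ for every $\kappa > 0$, which is precisely Assumption (A) in Theorem~\ref{P3.5}. That theorem forces $z = \tilde z$. Thus once item 2 is established, item 3 is essentially bookkeeping, and the crux of the proof is the global $L^2$-regularity of the density.
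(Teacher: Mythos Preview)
Your proposal is correct and lands on the same core tools as the paper (It\^o's formula for item~1, a Krylov-type $L^2$ estimate for item~2, and Theorem~\ref{P3.5} for item~3), but the organization of item~2 differs. The paper does not pass through Lemma~\ref{LBKR} at all; instead it fixes a deterministic starting point $Y_0 = x_0$, invokes the Krylov-type bound from Stroock--Varadhan (Exercise~7.3.3) to get $\sup_{x_0} \int_{[0,T]\times\R} p_t^2(x_0,y)\,dt\,dy < \infty$ with a constant depending only on the upper and lower bounds of $a$, and then for a general initial law writes $\rho(t,y) = \int p_t(x,y)\,z^0(dx)$ and applies Jensen's inequality plus Fubini to conclude $\rho \in L^2([0,T]\times\R)$. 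Your route---first extracting local $L^2$ regularity from Lemma~\ref{LBKR} and then upgrading to global via Krylov or an energy argument---is logically sound but circuitous: once you invoke the Krylov estimate you already have the global $L^2$ bound directly, so the preliminary appeal to Lemma~\ref{LBKR} buys nothing. The paper's conditioning-on-$x_0$ trick is the cleanest way to handle the general initial distribution, and it makes transparent exactly where uniform (not merely local) ellipticity enters, a point you correctly flag as a tension in the stated hypothesis.
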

\begin{rem} \label{rsv}
 A necessary and sufficient condition
 for the existence and uniquess  in law of solutions for the equation in
Proposition \ref{Psqintd}, is that  $Y$ solves the martingale 
problem of Stroock-Varadhan, see Chap. 6 of \cite{SV79}, related to
$ L_t f =  a(t,x) f''.$
In our case, existence and uniqueness follow
 for instance from \cite{SV79}, Exercises 7.3.2-7.3.4, see
also \cite{ks}, Refinements 4.32, Chap. 5.
We remark that the coefficients are not continuous but only
measurable, so that space dimension 1 is essential. 

The reader can also consult 
\cite{RS93,S93} for more refined conditions to
be able to construct a weak solution; however those
do not apply in our case.

\end{rem}
\begin{prooff} \ (of the Proposition \ref{Psqintd}).

\begin{enumerate}
\item The first point follows from a direct application of
It\^o's formula to $\varphi(Y_t)$, $ \varphi \in \shs(\R)$,
cf. the proof of Theorem \ref{TI.1}.
\item We first suppose that $Y_0 = x_0$ where $ x_0 \in \R$. In this case
its law  $z^0$ equals   $ \delta_{x_0}$ i.e. Dirac measure in $x_0$.
In  Exercise 7.3.3 of \cite{SV79},  the following Krylov type estimate is 
provided:
$$ \left \vert E  \left( \int_0^T f(t, Y_t) dt \right ) \right \vert
\le {\rm const}
\Vert f \Vert_ {L^2 ([0,T] \times \R)},$$
for every smooth function  $f: [0,T] \times \R
\rightarrow \R$ with compact support. 
This  implies the existence of 
a density $ (t,y) \mapsto p_t (x_0, y)$ for the 
measure $(t,y) \mapsto E(\int_0^T f(t,Y_t)dt)$.
and
$$\left \vert  \int_{[0,T] \times \R} f(t, y) p_t(x_0,y) dt dy 
 \right \vert \le
 {\rm const}
\Vert f \Vert_ {L^2 ([0,T] \times \R)},
$$
and  ${\rm const}$ does not depend on $x_0$, but only on
lower and upper bounds of $a$. 
This obviously implies that 
$$ \sup_{x_0 \in \R} \int_{[0,T] \times \R}   p_t^2 (x_0, y) dt dy < \infty. $$
This implies assertion 2.,  when $Y_0$ is deterministic.

If the initial condition $Y_0$ is any law $z^0 (dx)$, then 
clearly the density of $Y_t$ is
$z_t(dy) = \rho(t,y) dy $
where $\rho(t,y)  = \int_\R u_0 (dx) p_t (x, y) $.

Consequently, by Jensen's  inequality and Fubini's Theorem, 
\begin{equation*}
 \int_{ [0,T] \times \R  } \rho^2 (t, y) dt dy 
 \le  \int_\R  u_0 (dx) \int_{ [0,T] \times \R}  p_t^2 (t, x, y) dt dy 
< \infty.
\end{equation*}
\item The final assertion follows by 2. from Theorem \ref{P3.5}.
\end{enumerate}
\end{prooff}

Now we come back to the probabilistic representation of 
 equation (\ref{E1.1}).

Let us consider the solution $ u  \in  (L^1 \bigcap L^\infty) ([0,T] 
\times \R)$ from Proposition \ref{P3.1}, that is, $u$ solves
 equation (\ref{E3.1a}), in the sense of  (\ref{E3.1}),
assuming the
 initial condition $u_0$ is an a.e. bounded probability density.
Define 
\begin{equation} \label{E4.0}
\chi_u (t,x) := \left \{
\begin{array}{ccc}
\sqrt \frac{\eta_u (t,x)}{u(t,x)} \ & {\rm if} & \ u(t,x) \neq 0 \\
c_1  \ & {\rm if} & \ u(t,x) = 0,
\end{array} \right.
\end{equation}
where $c_1 \in \Phi(0)$.
Note that, because $\beta$ is non-degenerate and 
$\chi_u(t,x) \in \Phi(u(t,x))$
  $dt \otimes dx$-a.e.,  we have    $\chi_u \ge c_0  > 0, \ dt \otimes dx$-a.e.
Since $\chi_u$ is only defined 
$dt \otimes dx$-a.e, let us fix a Borel version.
According to Remark \ref{rsv}, it is possible to construct a 
(unique in law) process  
 $Y$ which  is the weak solution of
\begin{equation} \label{E4.0bis}
 Y_t = Y_0 + \int_0^t \chi_u(s,Y_s) dW_s
\end{equation}
where $W$ is a classical Brownian motion 
on some filtered probability space and $Y_0$ 
is a random variable so that  $u_0$ is the density of its law.

Consider now the law $v(t,\cdot)$ of the  process $Y_t$.
We set $a(t,x) = \frac{\chi_u^2(t,x)}{2}$.
 Since $a \ge c  > 0 $, 
 Proposition \ref{Psqintd} implies that  $v \in L^2([0,T] \times \R)$ 
and it solves the equation 
\begin{equation}\label{E4.1}
\left \{
\begin{array}{ccc}
\partial_t v &=&\partial_{xx}^2
(a v)  \\
v(0,x)& =& u_0(x).  
\end{array}
\right.
\end{equation}

On the other hand $u$ itself, which is a solution to 
(\ref{E3.1a}) (in the sense of (\ref{E3.1})),  is another
 solution of equation (\ref{E4.1}).
So, being in $(L^1 \bigcap L^\infty) ([0,T] \times \R)$, $u$  is
 also square integrable.
Setting $z_1 = v, z_2 = u $,  Theorem \ref{P3.5}  implies that
$v = u, \ dt\otimes dx $-a.e. 

 Since $u \in C([0,T], L^1(\R)$ and $Y$ has continuous sample paths, it follows that
 $u(t,\cdot) = v(t, \cdot), \ dx$-a.e. for all $t \in [0,T]$. 

The considerations above  prove the existence part
of the following representation theorem,
at least in the non-degenerate case.

\begin{theo} \label{T4.2} 
Suppose that  Assumption \ref{H3.0} holds.
Let  $u_0 \in L^1 \bigcap L^\infty$ such that $u_0 \ge 0$ and
$\int_\R u_0(x)dx = 1$.
Suppose the multi-valued map $\Phi$  is  bounded and non-degenerate.
Then there is a process $Y$, unique  in law, such that there
exists $ \chi \in  (L^1 \bigcap L^\infty) ([0,T] \times \R)$ with
\begin{equation}
\label{E4.2}
\left \{
\begin{array}{ccc}
Y_t &=& Y_0 + \int_0^t \chi(s,Y_s) dW_s \ {\rm (weakly)} \\
\chi(t, x) &\in & \Phi(u(t,x)), \ {\rm for} \ dt \otimes dx {\rm -a.e.} \
(t,x) \in [0,T] \times \R \\
{\rm Law \quad density \quad of }  \quad Y_t &=& u(t,\cdot) \\
u(0,\cdot) &=& u_0,     
\end{array}
\right.
\end{equation}
with $u \in C([0,T]; L^1(\R)) \bigcap
 L^\infty( [0,T] \times\R) $.
\end{theo}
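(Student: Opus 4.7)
The existence half has effectively been carried out in the paragraphs immediately preceding the statement: starting from the unique $(u,\eta_u)$ supplied by Proposition \ref{P3.1}, one defines $\chi_u$ via \eqref{E4.0}, uses Remark \ref{rsv} to build a weak solution $Y$ of \eqref{E4.0bis}, and then shows via Proposition \ref{Psqintd} combined with the linear uniqueness of Theorem \ref{P3.5} that the law density of $Y_t$ equals $u(t,\cdot)$. My plan therefore focuses on the uniqueness-in-law part, which the preceding narrative has not yet addressed.

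Let $(Y^i,\chi^i)$, $i=1,2$, be two triples as in \eqref{E4.2}, with associated law densities $u^i$ belonging to $C([0,T];L^1(\R))\cap L^\infty([0,T]\times\R)$. I would first apply It\^o's formula to $\varphi(Y^i_t)$ for $\varphi\in\shs(\R)$ and take expectation, following the computation in the proof of Theorem \ref{TI.1}, in order to show that the pair $(u^i,\eta^i)$ with $\eta^i:=(\chi^i)^2\,u^i$ solves \eqref{E3.1a} in the sense of \eqref{E3.1}. The membership $\eta^i(t,x)\in\beta(u^i(t,x))$ for $dt\otimes dx$-a.e.\ $(t,x)$ follows from $\chi^i(t,x)\in\Phi(u^i(t,x))$ together with the identity $\beta(r)=\Phi^2(r)r$, while $\eta^i\in(L^1\cap L^\infty)([0,T]\times\R)$ is a consequence of the boundedness of $\Phi$ and of $u^i$.

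Proposition \ref{P3.1} then forces $u^1=u^2=:u$, and Remark \ref{R3.1}(1) yields $\eta^1=\eta^2$ $dt\otimes dx$-a.e. Combined with the non-degeneracy hypothesis $\chi^i\ge c_0>0$, this upgrades the identity $(\chi^1)^2 u=(\chi^2)^2 u$ to $\chi^1=\chi^2$ $dt\otimes dx$-a.e.\ on the set $\{u>0\}$. The concluding step is to argue that the behaviour of $\chi^i$ on $\{u=0\}$ is irrelevant for the law of $Y^i$: since $Y^i_t$ has density $u(t,\cdot)$, Fubini gives $\int_0^T P\bigl(Y^i_s\in\{x:u(s,x)=0\}\bigr)\,ds=0$, so both processes almost surely obey the SDE driven by the same bounded coefficient, which is uniformly bounded below by $c_0$. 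The one-dimensional Stroock--Varadhan theory invoked in Remark \ref{rsv} then supplies uniqueness in law for the associated martingale problem with generator $L_t f=\tfrac{1}{2}\chi^2(t,x)f''$, and the argument is complete. The main obstacle I expect is precisely this last identification: the other reductions are algebraic consequences of Propositions \ref{P3.1} and \ref{Psqintd}, but one must carefully check that modifications of $\chi^i$ on a $dt\otimes dx$-null set for $u$ do not alter the martingale problem whose well-posedness we are invoking.
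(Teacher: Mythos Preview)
Your proposal is correct and follows essentially the same route as the paper: apply (the multi-valued form of) Theorem~\ref{TI.1} to see that each $u^i$ solves \eqref{E3.1a}, invoke the uniqueness in Proposition~\ref{P3.1} to get $u^1=u^2$ and $\eta^1=\eta^2$, deduce that $\chi^1=\chi^2$ (at least where $u>0$), and conclude via the one-dimensional Stroock--Varadhan well-posedness from Remark~\ref{rsv}. The paper states ``$\chi_1=\chi_2$ a.e.'' and then remarks that the stochastic integrals are independent of the chosen Borel version of $\chi$ since $Y^i_t$ has a density; your explicit treatment of the set $\{u=0\}$ via Fubini makes this step more transparent but is the same idea.
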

  \begin{rem} \label{R4.2}
If $\Phi$ is single-valued then $\chi_u \equiv \Phi(u)$.
\end{rem}

\begin{proof}

Existence has been established above.
Concerning uniqueness, given two solutions 
 $Y^i, i = 1, 2 $ of (\ref{E4.2}) i.e.  (\ref{E1.2}). 
By $u_i(t, \cdot),  i = 1, 2 $,  we denote  the law densities of 
respectively $Y^i, i =1,2$ with corresponding $\chi_1$ and $\chi_2$.

The multi-valued version of Theorem \ref{TI.1}
says that $u_1$ and $u_2$ solve equation (\ref{E1.1}) in the sense 
of distributions,
so that by Proposition \ref{P3.1} (uniqueness for (\ref{E3.1})) 
 we have $u_1 = u_2$, and also $\chi_1 = \chi_2$ a.e.

We note that, since $Y^i_t$ has a law density for all $t > 0$, the stochastic 
integrals in (\ref{E4.2}) are independent
 of the chosen Borel version of $\chi$.
Remark \ref{rsv} now implies that the laws of $Y^1$ and $Y^2$
(on path space) coincide.
\end{proof}

\begin{cor} \label{C4.2} 
Consider the situation of Theorem \ref{T4.2} and let
    $v_0 \in L^1 \bigcap L^\infty$ be such that $v_0 \ge 0$.
The unique solution $v$ to 
equation (\ref{E3.1a}) with initial condition $v_0$
is non negative for any $t \ge 0$.
Moreover, the mass $\int_\R v(t, x) dx$ does not depend on $t$. 
\end{cor}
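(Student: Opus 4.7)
The plan is to reduce Corollary 4.2 to Theorem 4.2 by a rescaling of $\beta$ that turns $v_0$ into a probability density. The case $v_0 = 0$ a.e. is trivial by the uniqueness in Proposition 3.1 (giving $v \equiv 0$), so I may assume $M := \int_\R v_0(x)\,dx > 0$ and set $u_0 := v_0/M$, which is a bounded probability density.

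Next I would introduce the rescaled graph
\[
\tilde\beta(u) := \frac{1}{M}\,\beta(Mu), \qquad u \in \R,
\]
and check that $\tilde\beta$ inherits from $\beta$ all the hypotheses of Theorem 4.2. Monotonicity and linear growth are immediate; writing $\tilde\beta(u) = \tilde\Phi^2(u)\,u$ gives $\tilde\Phi(u) = \Phi(Mu) \ge c_0$, so $\tilde\beta$ remains non-degenerate; and the surjectivity of $\tilde\beta + \lambda\,\mathrm{id}$ reduces, after the change of variable $y = Mu$, to that of $\beta + \lambda\,\mathrm{id}$.

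Applying Theorem 4.2 to $\tilde\beta$ and $u_0$ then yields a process $\tilde Y$ whose law density $\tilde u(t,\cdot)$ solves $\partial_t \tilde u = \tfrac{1}{2} \partial_{xx}^2 \tilde\beta(\tilde u)$ in the sense of distributions with $\tilde u(0,\cdot) = u_0$; in particular $\tilde u(t,\cdot) \ge 0$ and $\int_\R \tilde u(t,x)\,dx = 1$ for every $t$. I would then set $v(t,x) := M\,\tilde u(t,x)$ and verify, using the identity $M\,\tilde\beta(\tilde u) = \beta(v)$ applied to the selection produced by Theorem 4.2, that $v$ satisfies the distributional formulation (3.1) with initial condition $v_0$. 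By the uniqueness in Proposition 3.1, this $v$ coincides with the solution of the corollary, so $v(t,\cdot) \ge 0$ and $\int_\R v(t,x)\,dx = M$ for all $t \in [0,T]$, giving both claims.

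The only point that requires some care is the verification that the rescaled graph $\tilde\beta$ still satisfies the m-accretivity and non-degeneracy hypotheses of Theorem 4.2; everything else is a direct change of scale.
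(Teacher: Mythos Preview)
Your argument is correct and follows exactly the same rescaling idea as the paper: with $\mu_0 = \int_\R v_0\,dx$, one observes that $u := v/\mu_0$ solves $\partial_t u = \tfrac12 \partial_{xx}^2\bigl(\beta(\mu_0 u)/\mu_0\bigr)$ with probability-density initial datum $v_0/\mu_0$, and then invokes Theorem~\ref{T4.2} for this rescaled $\beta$. The paper's proof is just a terser version of yours, omitting the explicit verification (which you correctly carry out) that the rescaled graph retains the required properties.
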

\begin{proof} \
Set $\mu_0 = \int_\R v_0 (y) dy$, which we can suppose  to be
greater than $0$.
Then the function $u(t,x) = \frac{v(t,x)}{\mu_0} $
solves equation (\ref{E3.1a}) 
\begin{equation} \label{E3.1b}
\left \{
\begin{array}{ccc}
\partial_t u&=& \frac{1}{2}\partial_{xx}^2(\frac{\beta(\mu_0 u)}{\mu_0}), \\
u(0, \cdot )& =& \frac{v_0}{\mu_0}.  
\end{array}
\right.
\end{equation}
Hence, the result follows from Theorem \ref{T4.2}.
\end{proof} 
\begin{rem} \label{R4.6} 
We note that if $\Phi$ is merely bounded below by a strictly positive
constant on every compact set and if the solutions 
$u$ are continuous
on $[0,T] \times
\R$, then Theorem \ref{T4.2} and Corollary \ref{C4.2} still hold.
In fact, Stroock-Varadhan arguments contained in Remark
\ref{rsv} are still valid if $\chi_u$ is strictly positive
on each compact set.
\end{rem}

\subsection{The degenerate case} \label{sub4.2}

The degenerate  case is much more difficult
and will be analyzed in detail in  the forthcoming paper \cite{BRR}. 
In this subsection we only explain the first two steps in the special
case where our $\beta$ of Section 1 is of the form
$\beta(u) = \Phi^2(u) u$ and the following properties hold:
\begin{property}\label{E4.6prime}
$\Phi: \R \rightarrow \R$  is  single-valued,  continuous on 
$\R-\{0\}$.
\end{property}
\begin{rem}\label{R4.6prime}
A priori $\Phi(0)$ is an interval; however, by convention,
in this subsection, we will set $\Phi(0) := \liminf_{\varepsilon 
\rightarrow 0+} \Phi(u)$.
This implies that $\Phi$ is always lower semicontinuous.
\end{rem}
We furthermore assume that the initial condition $u_0$ and $\Phi$ are
such that we have for the corresponding solution $u$ to (\ref{E1.1})
(in the sense of Proposition \ref{P3.1})  the following:
\begin{property} \label{E4.7prime}
$\Phi^2(u(t,\cdot)): \R \rightarrow \R$   is  Lebesgue  almost 
  everywhere  continuous for $ dt $ a.e $t \in [0,T]$.
\end{property}

\begin{rem} \label{R4.7}
As will be shown in \cite{BRR} Property \ref{E4.7prime}
is fulfilled in many interesting cases, for a large class of intial
conditions.
In fact, we expect to be able to show that $u(t,\cdot)$ is even 
 locally of
bounded
variation if so is $u_0$.
\end{rem}

\begin{prop} \label{PDeg} Suppose that Property \ref{E4.6prime} holds. 
Let $u_0 \ge 0$ be a bounded integrable real function such that $\int_\R
u_0(x) dx = 1$ and the corresponding solution $u$ to (\ref{E1.1}) 
satisfies  Property \ref{E4.7prime}.
Then, there is at least one  process $Y$ such that
\begin{equation}
\label{EProbDeg}
\left \{
\begin{array}{ccc}
Y_t &=& Y_0 + \int_0^t \Phi(u(s,Y_s)) dW_s  \ {\rm in \ law} \\
{\rm Law \quad density } (Y_t) &=& u(t,\cdot), \\
u(0,\cdot) &=& u_0
\end{array} \right.
\end{equation}
\end{prop}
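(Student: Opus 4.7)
The plan is to construct $Y$ as a weak limit of the regularised diffusions provided by Theorem \ref{T4.2} applied to the strictly positive function $\Phi_\varepsilon:=\Phi+\varepsilon$, $\varepsilon\in(0,1]$. For each such $\varepsilon$, Theorem \ref{T4.2} will yield a weak solution $Y^\varepsilon$ of
\begin{equation*}
Y^\varepsilon_t = Y_0 + \int_0^t \Phi_\varepsilon(u^\varepsilon(s,Y^\varepsilon_s))\,dW_s,
\end{equation*}
with law density $u^\varepsilon(t,\cdot)$, where $u^\varepsilon\in C([0,T];L^1(\R))$ is the distributional solution of \eqref{E1.1} with $\beta$ replaced by $\beta_\varepsilon(u):=\Phi_\varepsilon^2(u)u$. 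Proposition \ref{P3.1} provides the uniform bound $\Vert u^\varepsilon(t,\cdot)\Vert_\infty\le\Vert u_0\Vert_\infty$, and a standard continuous-dependence argument for m-accretive semigroups (in the spirit of Theorem 1.2 of \cite{CE75} combined with $\beta_\varepsilon\to\beta$ in the graph sense) yields $u^\varepsilon\to u$ in $C([0,T];L^1(\R))$, where $u$ is the solution of \eqref{E1.1}.

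Next, the uniform bound $\Phi_\varepsilon\le\Vert\Phi\Vert_\infty+1$ together with Burkholder--Davis--Gundy gives $E|Y^\varepsilon_t-Y^\varepsilon_s|^4\le C|t-s|^2$, so Kolmogorov's criterion yields tightness of the laws of $(Y^\varepsilon)$ on $C([0,T])$. Along a subsequence $\varepsilon_n\downarrow 0$ the laws of $Y^{\varepsilon_n}$ converge, and by Skorohod's representation theorem I may assume almost sure uniform convergence $Y^{\varepsilon_n}\to Y$ on a new probability space. Weak convergence of marginals together with the $L^1$-convergence $u^{\varepsilon_n}(t,\cdot)\to u(t,\cdot)$ then identifies $u(t,\cdot)$ as the density of the law of $Y_t$ for every $t\in[0,T]$, giving the last two lines of \eqref{EProbDeg}.

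The main step---and the main obstacle---is to verify that $Y$ is a weak solution of the target SDE. By the usual one-dimensional martingale characterisation, this amounts to proving that for every $\varphi\in C_c^\infty(\R)$ the process
\begin{equation*}
M^\varphi_t := \varphi(Y_t)-\varphi(Y_0)-\frac{1}{2}\int_0^t\varphi''(Y_s)\,\Phi^2(u(s,Y_s))\,ds
\end{equation*}
is a martingale in the natural filtration of $Y$. It\^o's formula applied to $Y^{\varepsilon_n}$ produces an analogue $M^{\varepsilon_n,\varphi}$ with $\Phi^2(u)$ replaced by $\Phi_{\varepsilon_n}^2(u^{\varepsilon_n})$, and the martingale property is preserved in the limit provided one can pass to the limit in
\begin{equation*}
E\int_0^t\varphi''(Y^{\varepsilon_n}_s)\,\Phi_{\varepsilon_n}^2(u^{\varepsilon_n}(s,Y^{\varepsilon_n}_s))\,ds \;\longrightarrow\; E\int_0^t\varphi''(Y_s)\,\Phi^2(u(s,Y_s))\,ds.
\end{equation*}

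To carry out this passage, which is where Properties \ref{E4.6prime} and \ref{E4.7prime} are indispensable, I would split the difference of integrands into three contributions. The first, $\Phi_{\varepsilon_n}^2-\Phi^2=2\varepsilon_n\Phi+\varepsilon_n^2$, is $O(\varepsilon_n)$ uniformly and contributes nothing. The second, $\Phi^2(u^{\varepsilon_n}(s,Y^{\varepsilon_n}_s))-\Phi^2(u(s,Y^{\varepsilon_n}_s))$, has expected absolute value
\begin{equation*}
\int_0^t\!\int_\R|\Phi^2(u^{\varepsilon_n}(s,x))-\Phi^2(u(s,x))|\,u^{\varepsilon_n}(s,x)\,dx\,ds,
\end{equation*}
since $Y^{\varepsilon_n}_s$ has density $u^{\varepsilon_n}(s,\cdot)$; this tends to zero by Vitali's theorem, using Property \ref{E4.6prime}, the subsequential a.e.\ convergence of $u^{\varepsilon_n}$, the uniform $L^\infty$-bound on $u^{\varepsilon_n}$, and the uniform integrability inherited from $L^1$-convergence (on $\{u=0\}$ the integrand still vanishes because $u^{\varepsilon_n}\to 0$ there and $\Phi$ is bounded). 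The third and subtlest contribution, $\Phi^2(u(s,Y^{\varepsilon_n}_s))-\Phi^2(u(s,Y_s))$, is precisely where Property \ref{E4.7prime} is used: for a.e.\ $s$ the map $x\mapsto\Phi^2(u(s,x))$ is continuous Lebesgue-a.e., and since $Y_s$ has a density it almost surely avoids the negligible discontinuity set, so the a.s.\ convergence $Y^{\varepsilon_n}_s\to Y_s$ from Skorohod together with bounded convergence dispose of this term. Combining the three contributions yields the martingale property of $M^\varphi$ and completes the construction of $Y$.
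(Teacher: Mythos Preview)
Your overall strategy coincides with the paper's: regularise by $\Phi_\varepsilon=\Phi+\varepsilon$, invoke Theorem \ref{T4.2} for $Y^\varepsilon$, obtain tightness from the fourth-moment increment bound, pass to an a.s.\ limit via Skorohod, identify the marginal densities through the continuous-dependence result $u^\varepsilon\to u$ in $C([0,T];L^1)$ (the paper cites \cite{BeC81}, Theorem~3, rather than \cite{CE75}), and finally verify the limiting martingale problem. The use of Property~\ref{E4.7prime} for your ``third contribution'' is exactly the paper's argument for the corresponding term.

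The one genuine difference is how you handle the passage $\Phi_{\varepsilon_n}^2(u^{\varepsilon_n})\to\Phi^2(u)$ under the law of $Y^{\varepsilon_n}$. You split off the trivial $O(\varepsilon_n)$ piece and then treat $\int|\Phi^2(u^{\varepsilon_n})-\Phi^2(u)|\,u^{\varepsilon_n}$ by a.e.\ convergence along a subsequence plus Vitali, using only boundedness of $\Phi$ and its continuity on $\R\setminus\{0\}$. The paper instead writes the squared difference as $J_1+J_2-2J_3$, uses the $\sigma(L^1,L^\infty)$ convergence $\beta_{\varepsilon_n}(u^{\varepsilon_n})\to\beta(u)$ coming from the proof of Proposition~\ref{P3.1} to get $J_1,J_2\to J$, and controls the cross term $J_3$ via Fatou and the lower semicontinuity of $\Phi$ (Remark~\ref{R4.6prime}). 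Your route is more elementary and self-contained; the paper's route reuses structure already established for the PDE and does not require extracting an a.e.-convergent subsequence of $u^{\varepsilon_n}$. Both are correct. One small point of exposition: to obtain the martingale property of $M^\varphi$ you must pass to the limit in $E[(M^{\varepsilon_n,\varphi}_t-M^{\varepsilon_n,\varphi}_s)\Theta(Y^{\varepsilon_n}_r,r\le s)]$ for bounded continuous $\Theta$, not merely in the unconditional expectations you wrote; your three-term analysis carries over unchanged when multiplied by such a $\Theta$, so this is only a matter of stating it.
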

\begin{cor} \label{CNondeg}
Suppose that  Property \ref{E4.6prime} holds.
Let  $u_0 \in L^1 \bigcap L^\infty$ be such that $u_0 \ge 0$ and
that the corresponding solution $u$ to (\ref{E1.1}) satisfies 
 Property \ref{E4.7prime}. 
The unique solution $u$ to 
equation (\ref{E3.1a}) is non-negative for any $t \ge 0$.
Moreover, the mass $\int_\R u(t, x) dx$ is constant in $t \in [0,T]$. 
\end{cor}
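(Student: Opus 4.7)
The plan is to mimic the proof of Corollary \ref{C4.2}, substituting Proposition \ref{PDeg} for Theorem \ref{T4.2}, so as to extract non-negativity and mass conservation from the probabilistic representation. First I would reduce to the case of a probability density initial condition. Set $\mu_0 := \int_\R u_0(x)\,dx$. If $\mu_0 = 0$ then $u_0 \equiv 0$, and by uniqueness in Proposition \ref{P3.1} we have $u \equiv 0$, so both claims are trivial. Otherwise, I would introduce $\tilde u_0 := u_0/\mu_0$, a bounded probability density, and consider the rescaled nonlinearity $\tilde\beta(w) := \beta(\mu_0 w)/\mu_0$, with associated $\tilde\Phi(w) := \Phi(\mu_0 w)$. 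A direct computation (as in Corollary \ref{C4.2}) shows that $\tilde u(t,x) := u(t,x)/\mu_0$ solves, in the sense of distributions,
\begin{equation*}
\partial_t \tilde u = \halb \partial_{xx}^2 \bigl( \tilde\beta(\tilde u) \bigr), \qquad \tilde u(0,\cdot) = \tilde u_0.
\end{equation*}

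Next I would verify that the hypotheses of Proposition \ref{PDeg} hold for the rescaled data $(\tilde u_0, \tilde\Phi)$. Property \ref{E4.6prime} transfers to $\tilde\Phi$ at once, since $w \mapsto \Phi(\mu_0 w)$ is continuous on $\R \setminus \{0\}$. For Property \ref{E4.7prime} one observes that $\tilde\Phi^2(\tilde u(t,x)) = \Phi^2(u(t,x))$, so Lebesgue-a.e.\ continuity in $x$ (for a.e.\ $t$) of the rescaled expression is exactly the same statement as the assumed property for $u$. Hence Proposition \ref{PDeg} applies and produces a process $\tilde Y$ whose law density at time $t$ is precisely $\tilde u(t,\cdot)$, for every $t \in [0,T]$.

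Finally, since a law density is non-negative and integrates to one, this forces $\tilde u(t,\cdot) \ge 0$ and $\int_\R \tilde u(t,x)\,dx = 1$ for every $t \in [0,T]$. Unscaling gives $u(t,\cdot) = \mu_0 \tilde u(t,\cdot) \ge 0$ and $\int_\R u(t,x)\,dx = \mu_0$, constant in $t$, which is the claim. The only point that requires care, and which I regard as the main (albeit mild) obstacle, is checking that the linear rescaling $w \mapsto \mu_0 w$ preserves the structural hypotheses on $\beta$ and $\Phi$ needed to invoke Proposition \ref{PDeg}, in particular Property \ref{E4.7prime} for the rescaled solution; once this bookkeeping is done, the rest of the argument is automatic from the probabilistic representation.
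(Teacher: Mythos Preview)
Your proposal is correct and follows exactly the approach the paper intends: the paper states Corollary~\ref{CNondeg} without proof, implicitly relying on the same rescaling argument given for Corollary~\ref{C4.2}, with Proposition~\ref{PDeg} replacing Theorem~\ref{T4.2}. Your additional care in verifying that Properties~\ref{E4.6prime} and~\ref{E4.7prime} transfer to the rescaled data $(\tilde u_0, \tilde\Phi)$ --- in particular the observation that $\tilde\Phi^2(\tilde u(t,x)) = \Phi^2(u(t,x))$ --- fills in precisely the bookkeeping the paper leaves to the reader.
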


\begin{prooff} \ (of Proposition \ref{PDeg}).  \
We denote   the solution  to 
equation (\ref{E3.1}), by $u = u(t,x)$. 

Let $ \varepsilon \in ]0,1]$
and set $\beta_\varepsilon  (u) = (\Phi(u) +
\varepsilon)^2 u, \quad  \Phi_\varepsilon(u) = \Phi(u) + \varepsilon,
u \in \R $.
Proposition \ref{P3.1} provides the solution $u = u^\varepsilon$ to the 
deterministic PDE equation (\ref{E3.1})
$$
\left \{
\begin{array}{ccc}
\partial_t u&=& \frac{1}{2} \partial_{xx}^2(\beta_\varepsilon  (u)), \\
u(0,x)& =& u_0(x).  
\end{array}
\right.
$$

 We consider the
unique solution $Y = Y^\varepsilon$ in law of 
\begin{equation}
\label{EProbDeg1}
\left \{
\begin{array}{ccc}
Y_t &=& Y_0 + \int_0^t \Phi_\varepsilon(u(s,Y_s)) dW_s  \\
{\rm Law \quad density } (Y_t) &=& u^\varepsilon(t,\cdot) \\
u^\varepsilon (0,\cdot) &=& u_0.
\end{array} \right.
\end{equation}

Since $\Phi + \varepsilon$ is non-degenerate,
this is possible because of Theorem \ref{T4.2}.

Since $\Phi$ is bounded, using Burkholder-Davies-Gundy
 inequality 
one obtains 
\begin{equation} \label{E4.7''}
 \E \{Y^\varepsilon_t - Y^\varepsilon_s \}^4 \le {\rm const}
(t-s)^2, \forall \varepsilon  > 0.
\end{equation}
where const does not depend on $\varepsilon$.
 Using the Garsia-Rodemich-Rumsey lemma (see for instance 
\cite{BY82}, (3.b), p. 203), we obtain that
$$ \sup_{\varepsilon > 0} E \left( \sup_{s, t \in [0,T]}
\frac{ \vert Y^\varepsilon_t - Y^\varepsilon_s
 \vert^4  }{ \vert t - s \vert} \right)  <
\infty. $$
Consequently, using Chebyshev's inequality 
$$ \lim_{\delta \rightarrow 0} \sup_{\varepsilon   > 0}
P (\{ \sup_{s, t \in [0,T], \vert t- s\vert \le \delta }
 \vert Y^\varepsilon_t - Y^\varepsilon_s
 \vert  > \lambda \} )  = 0, \ \forall \lambda  > 0. $$
This implies condition (4.7) of Theorem 4.10 in Section 2.4 
of \cite{ks}.
Condition (4.6) of the same theorem requires
$$ \lim_{\lambda \rightarrow + \infty} 
\sup_{\varepsilon > 0} P \{ \vert Y_0^\varepsilon \vert \ge \lambda \} =0.$$
This is here trivially satisfied since the law of $ Y_0^\varepsilon$
is the same for all $\varepsilon$.
Thus the same theorem implies 
that the family of laws of $Y^\varepsilon, \varepsilon  > 0$,   is tight.

Consequently, there is a subsequence $Y^n := Y^{\varepsilon_n}$
 converging in law
(as $C[0,T]$-valued random elements)  to some process $ Y$.
We set $\Phi_n := \Phi_{\varepsilon_n}$ and 
 $u^n := u^{\varepsilon_n}$ where we recall that
 $u^n(t, \cdot) $ is  the law of $Y^n_t$.

We also set $X^n_t = Y^n_t - Y^n_0$.
Since 
$$ [X^n]_t = \int_0^t \Phi_n^2 (u^n(s, Y^n_s)) ds, $$
and $E([X^n]_T)$ is finite, $\Phi$ being bounded, 
the continuous local martingales $X^n$ are indeed  martingales. 

By  Skorokhod's theorem  there is a new  probability space
$(\tilde \Omega, \tilde \shf, \tilde  P)$ 
and processes $\tilde Y^n$, with the same distribution
as $Y^n$ so that $\tilde Y^n$ converge $\tilde P$-a.e. to some process
 $\tilde Y$, of course distributed as $Y$,  as $C([0,T])$- random elements.
In particular, the processes $\tilde X^n := \tilde Y^n - \tilde Y^n_0$
 remain martingales
with respect to the filtrations generated by themselves.
We denote   the sequence   $\tilde Y^n$ 
(resp. $ \tilde Y$), again by $Y^n$ (resp. $Y$).

\begin{rem}\label{RUY}
We observe that, for each $t \in [0,T]$,
$u(t, \cdot)$ is the law density of $Y_t$.
Indeed, for any $t \in [0,T]$, $Y^n_t$ converges in probability
to $Y_t$; on the other hand $u^n(t, \cdot)$, which is the law of $Y^n_t$,
 converges to $u(t, \cdot)$  in $L^1(\R)$ uniformly in $t$,
 cf. \cite{BeC81}, Theorem 3 and the preceeding remarks.
\end{rem}

\begin{rem} \label{Rem4.10}
Let $\shy^n$ (resp.  $\shy$)
be the canonical filtration associated with $Y^n$ (resp. $Y$).

 We set 
$$ W^n_t = \int_0^t \frac{1}{\Phi_n (u^n (s, Y^n_s))} dY^n_s.$$
Those processes $W^n$ are standard $(\shy^n_t$) -Wiener processes 
since $[W^n]_t =
t$ and because of L\'evy's characterization theorem of Brownian motion.
Then one has
$$  Y^n_t = Y_0^n + \int_0^t \Phi_n(u^n(s, Y^n_s)) dW^n_s.$$
\end{rem}

We aim to prove first that 
\begin{equation}\label{EFinDeg}
 Y_t = Y_0 + \int_0^t  \Phi(u(s, Y_s)) dW_s.
\end{equation}
Once the previous equation is established for the given $u$,
the statement of Proposition \ref{PDeg} would be completely
proven because of Remark \ref{RUY}.
In fact, that Remark shows  in particular  the third line of (\ref{EProbDeg}).

We consider the stochastic process  $X$ (vanishing at zero)
defined by $X_t = Y_t - Y_0$. We also set again $X^n_t = Y^n_t - Y_0^n$.

Taking into account Theorem 4.2 of Ch. 3 of \cite{ks}, 
as in Remark \ref{RMistake1},
 to establish (\ref{EFinDeg}) it will be enough to prove that $X$ is a $\shy$-
martingale with quadratic variation
$[X]_t = \int_0^t \Phi^2(u(s, Y_s)) ds. $

Let $s, t \in [0,T]$ with  $t > s$  and 
  $\Theta$  a bounded continuous function from
$C([0,s]) $ to $\R$.

 In order to prove the martingale property for $X$, we need to show that
$$ E\left((X_t - X_s) \Theta(Y_r, r \le s) \right) = 0.$$
But this follows because $Y^n \rightarrow Y$ a.s.  (so  $X^n
\rightarrow X$
a.s.) as  $C([0,T])$-valued processes; 
so for each $t \ge 0, \ X^n_t \rightarrow
X_t$ in $L^1(\Omega)$ since $(X^n_t, n \in \N)$ is bounded in 
$L^2( \Omega) $ and
  $$ E \left((X^n_t - X^n_s) \Theta(Y^n_r, r \le s) \right) = 0.$$
It remains to show that $X_t^2 - \int_0^t \Phi^2 (u(s,Y_s)) ds,
  t \in [0,T]$,
 defines a $\shy$-martingale, that is,
 we need to verify that 
$$  E\left((X_t^2 - X_s^2 - \int_s^t \Phi^2(u(r, Y_r)) dr )
\Theta(Y_r, r \le s) \right) = 0.$$
The left-hand side decomposes into $2(I^1(n) + I^2(n) + I^3(n)) $ 
where 
\begin{eqnarray*}
 I^1(n) &=&  E \left(  (X_t^2 - X_s^2 - \int_s^t \Phi^2 (u(r, Y_r) )dr)  \Theta(Y_r,
r \le s) \right ) \\ & - &  E \left ( \left((X^n_t)^2 - (X^n_s)^2 - \int_s^t
\Phi^2(u(r, Y^n_r)) dr \right) 
 \Theta(Y^n_r, r \le s) \right), \\
 I^2(n) &=&  E\left(\left( (X^n_t)^2 - (X^n_s)^2 - \int_s^t   \Phi_n^2(u^n (r, Y^n_r)) dr
  \right) \Theta(Y^n_r, r \le s)
\right ), 
\end{eqnarray*}
and 
$$
 I^3(n) =  E \left( \int_s^t \left(  \Phi^2_n(u^n (r, Y^n_r))
-   \Phi^2(u (r, Y^n_r))   \right) dr \Theta(Y^n_r, r \le s)
\right ). $$

We start by showing the convergence of 
$I^3(n)$.
 Now $\Theta(Y^n_r, r \le s)$ 
 is dominated by a constant.
Therefore, since $\Phi_n, \Phi$ are uniformly bounded and
$ a^2 - b^2 = (a-b)(a+b)$, by the Cauchy-Schwarz inequality, it
suffices to consider the expectation of 
\begin{equation} \label{EE4.9}  \int_s^t \left(  \Phi_n(u^n (r, Y^n_r)) 
-  \Phi (u (r, Y^n_r))  \right)^2  dr  
\end{equation}
which is equal to
\begin{eqnarray*}
\int_s^t E ( \Phi_n (u^n (r, Y^n_r)) &- & 
 \Phi(u (r, Y^n_r)) )^2 dr   \\
& = &  \int_s^t dr \int_\R  \left  (\Phi_n(u^n (r, y)) -
\Phi(u (r, y)) \right )^2
u^n (r,y) dy. 
\end{eqnarray*}
This equals
$J_1(n)  + J_2(n) - 2 J_3(n)$
where
\begin{eqnarray*}
J_1(n) &=&  \int_s^t  dr \int_\R   \Phi_n^2(u^n (r, y))u^n (r,y) dy \\
J_2(n) &=&  \int_s^t dr \int_\R  \Phi^2(u (r, y))u^n (r,y)  dy \\
J_3(n) &=&  \int_s^t dr \int_\R   \Phi_n(u^n (r, y))\Phi(u (r, y)) 
  u^n (r,y)  dy.
\end{eqnarray*}
Define
$$
J := \int_s^t \int_\R   \Phi^2(u(r, y))u (r,y) dy = 
 \int_s^t \int_\R  \beta(u(r,y)) dy
$$
To show that $I^3(n) \rightarrow 0$ as $n \rightarrow \infty$,  it
suffices to show that 
\begin{equation} \label{E4.9prime}
\lim_{n \rightarrow \infty} J_1(n) = \lim_{n \rightarrow \infty}
J_2(n) = J
\end{equation}
and 
\begin{equation} \label{E4.10prime}
\liminf_{n \rightarrow \infty} J_3(n) \ge J.
\end{equation}

Now repeating exactly the same arguments as in Point 3.
of Proposition \ref{P3.1}, it follows that
$\Phi_n^2(u_n) u_n \rightarrow \Phi^2(u) u$ in  $\sigma(L^1, L^\infty)$
as $n \rightarrow \infty$ which immediately implies (\ref{E4.9prime}).

Furthermore, by Fatou's lemma and since $\Phi_n \ge \Phi$,
$$ \liminf_{n \rightarrow \infty} J_3(n)   \ge 
\int_0^t \int_\R  \liminf_{n \rightarrow \infty} \Phi(u^n(r,y))
\Phi(u(r,y)) u(r,y) dy dr$$
which by the lower semicontinuity of $\Phi$, implies
(\ref{E4.10prime}).

Now we go on with the analysis of $I^2(n)$ and $I^1(n)$. 
$I^2(n)$  equals  zero because
$X^n$  is a martingale with quadratic variation given by
$[X^n]_t = \int_0^t \Phi_n^2(u^n (r, Y^n_r)) dr$.

We treat finally $I^1(n)$. We recall that $X^n \rightarrow
X$ a. s. as a random element in $C([0,T])$ and that
the sequence $E\left
((X^n_t)^4\right ) $ is 
bounded, so $(X^n_t)^2$   are uniformly integrable. Therefore,
 we have 
$$ E \left( (X^n_t)^2 - (X^n_s)^2) \Theta(Y^n_r, r \le s) \right ) -
E \left((X_t^2 - X_s^2) \Theta(Y_r, r \le s) \right) \rightarrow 0,$$
when $n \rightarrow \infty$.
It remains to prove that
\begin{equation} \label{EDec}
    \int_s^t E\left(  \Phi^2(u (r, Y_r)) - 
\Phi^2(u (r, Y^n_r))   \Theta(Y^n_r, r \le s)  dr  \right )
 \rightarrow 0.
\end{equation}
Now, for fixed $dr$-a.e.  $r \in [0,T]$, 
 $\Phi(u(r, \cdot))$ has a Lebesgue zero set of discontinuities.
Moreover,  the law of $Y_r$  has a density. 
 So, let $N(r)$ be the null event of all $\omega \in \Omega$  such that 
$Y_r (\omega)$ is a point of discontinuity of $\Phi(u(r, \cdot))$.
For $\omega \notin N(r)$ we have 
$$ \lim_{n \rightarrow \infty} \Phi^2(u(r, Y_r^n (\omega))) =  \Phi^2(u(r, Y_r (\omega))).$$
Hence Lebesgue dominated convergence theorem implies (\ref{EDec}).

\medskip
\end{prooff}


{\bf ACKNOWLEDGEMENTS} 

\noindent The three authors are grateful to the three
Referees who gave significant suggestions
to improve the quality of the paper.\\
\noindent The second and third named authors would like
to thank Prof. Viorel Barbu for stimulating discussions.
Financial support through the SFB 701 at Bielefeld University and
NSF-Grant 0606615
 is gratefully acknowledged.

\bigskip

 \end{document}